\theoremstyle{theorem}
\newtheorem{lemma}{Lemma}[section]
\newtheorem{theorem}{Theorem}[section]
\newtheorem{conjecture}{Conjecture}
\newtheorem{prop}{Proposition}[section]
\theoremstyle{definition}
\newtheorem{definition}{Definition}[section]
\newtheorem{example}{Example}[section]
\newtheorem{remark}{Remark}[section]
\def\edited#1{{\color{black}{#1}}}
\def\S{\mathcal{S}}
\def\Re{\mathcal{R}}
\def\C{\mathcal{C}}
\def\Z{\mathbb{Z}}
\newcommand{\EE}{\mathbb{E}}
\newcommand{\R}{\mathbb{R}}
\begin{document}

\title{Time-dependent product-form Poisson distributions for  reaction networks with higher order complexes}
\author{David F.~Anderson\thanks{University of Wisconsin-Madison, anderson@math.wisc.edu}  \and David Schnoerr\thanks{Imperial College, d.schnoerr@imperial.ac.uk} \and Chaojie Yuan\thanks{University of Wisconsin-Madison, cyuan25@math.wisc.edu}}

\maketitle

\begin{abstract}
	It is well known that stochastically modeled reaction networks that are complex balanced admit a stationary distribution that is a product of Poisson distributions.  In this paper, we consider the following related question: supposing that the initial distribution of a stochastically modeled reaction network is a product of Poissons, under what conditions will the distribution remain a product of Poissons for all time?  By drawing inspiration from Crispin Gardiner's ``Poisson representation'' for the solution to the chemical master equation, we provide a necessary and sufficient condition for  such a product-form distribution to hold for all time. Interestingly, the condition is a dynamical ``complex-balancing'' for only those complexes that have multiplicity greater than or equal to two (i.e.~the higher order complexes that yield non-linear terms to the dynamics).  We term this new condition the ``dynamical and restricted complex balance'' condition (DR for short).
\end{abstract}

\section{Introduction}

Reaction  networks are commonly utilized in the modeling of  biological processes such as gene regulatory networks, signaling networks,  viral infections, cellular metabolism, etc., and their dynamics are typically modeled in one of
three ways \cite{AK2015, schnoerr2017approximation}. If the counts of the constituent molecules are low, then the dynamics of the abundances is typically modeled stochastically with  a discrete-space, continuous-time Markov chain in $\Z^d_{\ge 0}$, where $d$ is the number of  species in the system.   \edited{If the counts are moderate then the concentrations of the constituent species are often approximated by some form of continuous diffusion process}.  However, If the counts of the constituent species are high, then the evolution of their concentrations is often modeled deterministically via a system of ordinary differential equations.

Analytic treatments of such models are rarely possible, and most existing approaches analyze steady states: fixed points of the concentrations in the deterministic modeling regime and stationary distributions in the stochastic regime. However, most biological processes are not in steady state and experiments typically measure transient dynamics. To identify the underlying interactions, time-dependent solutions of the relevant dynamical equations are needed \cite{munsky2018distribution, wilkinson2006stochastic}.  For stochastic systems modeled as discrete-space, continuous-time Markov chains, \edited{explicit formulas for the time dependent distributions of the process are rarely known except in some specific cases \cite{peccoud1995markovian,ramos2011exact}. To the best of our knowledge, the only general treatment of the time dependent behavior is derived for systems where all the reactant and product complexes (the vertices of the associated graph)  are of order zero or one  \cite{jahnke2007solving}.  
 } Because of this, either stochastic simulations or approximation methods are typically employed in the stochastic setting \cite{neuert2013systematic, schnoerr2017approximation, smadbeck2012efficient, zechner2012moment,shahrezaei2008analytical,cao2018linear}.  \edited{However, these approaches are typically computationally expensive, give rise to uncontrolled estimation errors, and/or are applicable to specific model classes \cite{schnoerr2017approximation}.}  To the best of our knowledge, the results presented in this article are the first that provide the exact time dependent distribution for a general class of reaction networks with higher order complexes.

In the series of papers \cite{F1,H,H-J1}, Feinberg, Horn, and Jackson introduced the  notion of network deficiency and proved that if the reaction network  (i) is weakly reversibility and (ii) has a deficiency of zero, then the resulting deterministically modeled system endowed with mass action kinetics is ``complex balanced,'' regardless of the choice of rate parameters.    See \cite{AndProdForm,AndKurtz2011,AK2015} for  terminology.
In \cite{AndProdForm}, Anderson, Craciun, and Kurtz proved a corresponding result for the associated jump Markov models.  In particular, they showed that any stochastic model whose deterministic counterpart is complex balanced (which, by the works cited above, includes all models whose network is weakly reversible and has a deficiency of zero) admits a  stationary distribution that is a product of Poissons. The specific distribution is
\begin{equation}\label{eq:789876}
	\pi(x) = \prod_{i=1}^d e^{-\tilde c_i} \frac{\tilde c_i^{x_i}}{x_i!}, \quad x\in \Z_{\ge 0}^d,
\end{equation}
where $\tilde c \in \R^d_{>0}$ is a  complex balanced fixed point of the corresponding deterministically modeled system.   See also \cite{ACKK2018}, where the processes considered in \cite{AndProdForm} were shown to be non-explosive, and \cite{AC2016}, where the main result from \cite{AndProdForm} was generalized to a class of models with non-mass action kinetics.  Finally, the interested reader may also see \cite{CW2016}, where a converse to the main theorem in \cite{AndProdForm} is shown.  Specifically, they show that if a system admits the stationary distribution \eqref{eq:789876}, then the associated deterministic model is complex balanced.

 In this paper we study a related question.  Consider a reaction network endowed with stochastic mass action kinetics and let $X_t$ denote the vector whose $i$th coordinate gives the count of species $i$ at time $t$.  We ask the following:  when is it the case that 
  \begin{align}
 \label{eq:initial_PP}
 	P(X_0=x)=  \prod_{i=1}^d e^{-\tilde c_i} \frac{\tilde c_i^{x_i}}{x_i!},  \ \ \text{ for $x \in \Z_{ \ge 0}^d$},
 \end{align}
 where  $\tilde c \in \R^d_{>0}$,
 implies there is a function of time $c:[0,\infty) \to \R^d_{> 0}$  with $c(0) = \tilde c$, for which
 \begin{equation}
 \label{eq:timedep_PP}
 	P(X_t=x)=  \prod_{i=1}^d e^{-c_i(t)} \frac{c_i(t)^{x_i}}{x_i!}, \quad \text{ for all } t \ge 0?
 \end{equation}
 That is, when can the model admit a time dependent distribution that is always a product of Poissons?  Further, when \eqref{eq:timedep_PP} does hold, what is the function $c$?
 
 A partial answer to this question has been  known for quite some time.  In particular, in \cite{gardiner1977poisson} Gardiner showed via \edited{\textit{the Poisson representation}} that if all complexes of the network are either zeroth or first order (which implies linear dynamics), then \eqref{eq:initial_PP} implies \eqref{eq:timedep_PP}  where $c$ is the solution to the associated deterministic model with initial concentration levels given by $c(0)$.  \edited{This result has also been  in \cite{jahnke2007solving}  using direct computations. } 
In this paper, we fully characterize which models have this desired property.  In particular, we introduce a dynamical and restricted (DR) complex balance condition (see Definition \ref{def:DR}), and prove in Theorem \ref{main_theorem} that this is a necessary and sufficient condition for  \eqref{eq:initial_PP} to imply \eqref{eq:timedep_PP}, with $c$ being the solution to the associated deterministic model.

 The outline of the remainder of the paper is as follows.  In Section \ref{sec:model}, we formally introduce the relevant mathematical models, giving the formal definition of a reaction network together with both the stochastic and deterministic model.  We also introduce our new DR condition.  In Section \ref{sec:PR}, we provide our main results, together with their motivation from the physics literature.    In particular, we demonstrate how Gardiner's Poisson Representation (PR), equation \eqref{eq:pr_ansatz}, implies a mathematical conjecture pertaining to which    systems of order two can admit a distribution that is a product of Poissons for all times. We then prove this conjecture while also generalizing to models of order two or higher. In Section \ref{sec:pr_examples}, we provide a series of examples.

\section{Mathematical model}
\label{sec:model}

We formally introduce the mathematical models considered in this paper, together with some key terminology.
\begin{definition}\label{def:network}
A \textit{reaction network} is a triple of finite sets, usually denoted $\{\S,\C,\Re\}$, satisfying the following:
\begin{enumerate}[(i)]
\item the \textit{species}, $\S=\{S_1,\dots,S_d\}$, are the  components whose abundances we wish to model dynamically;
\item the \textit{complexes}, $\C$, are  linear combinations of the species over the nonnegative integers.  Specifically, if $y\in \C$, then
\begin{align}\label{eq:y_LinearComb}
	y = \sum_{i = 1}^d y_i S_i,
\end{align}
with $y_i \in \Z_{\ge 0}$.
\item The \textit{reactions}, $\Re$, are a binary relation on the complexes.  The relation is typically denoted with  ``$\to$'', as in $y \to y'$.

We often enumerate the reactions by $k$, and for $y_k,y_{k}'\in \C$ with $y_k\to y_k' \in \Re$, we call $y_k$ and $y_k'$ the \emph{source} and \emph{product} complexes, respectively, of that reaction.
\end{enumerate}

We  also include the following usual conditions in this definition: every species must appear in at least one complex, every complex must appear as the source or product of at least one reaction, and we do not allow reactions of the type $y \to y \in \Re$ (i.e., we do not allow the source and product complex of a given reaction to be the same).
\end{definition}

Allowing for a slight abuse of notation, we will let $y$ denote both the linear combination of the species, as  in \eqref{eq:y_LinearComb}, and the vector whose $i$th component is $y_i$,  i.e.~$y=(y_1,y_2,\cdots,y_d)^T \in \mathbb{Z}^d_{\ge 0}$.   For example, when $\S=\{S_1,S_2,\dots,S_d\}$, we correspond $2S_1+S_2$ with $(2,1,0,0,\dots,0)^T \in \Z^d_{\ge 0}$. 

For a vector $u\in \R^d$, we let $\|u\|_1 = \sum_{i =1}^d |u_i|$.  We will say that a reaction network is of \textit{first-order} if $\|y\|_1 \leq 1$ for  $\forall y \in \C$, is of \textit{second-order} if $\|y\|_1 \leq 2$ for  $\forall y \in \C$, etc.  For example, the network $4S_1 + S_2 \rightleftarrows 3S_3$ is of 5th-order.

\edited{For a reaction network $\{\S,\C,\Re\}$, it is most commonly represented as a directed reaction graph whose nodes are the complexes and directed edges are given by the reactions. The connected components of the associated reaction graph are termed \textit{linkage classes}. }
A reaction network is said to be \textit{weakly reversible} if for any given reaction, $y\to y'\in \Re$ say, there are reactions, $y_1 \to y_1', \dots, y_\ell \to y_{\ell}'\in \Re$ with $y' = y_1$, $y_i' = y_{i+ 1}$ for each $i \in \{1, \dots, \ell-1\}$, and $y_\ell'= y$.  That is, a model is weakly reversible if \edited{each linkage class} is strongly connected when each complex is written exactly one time. 

When working in a theoretical setting, the set of species is often denoted $\{S_1,\dots, S_d\}$.  However, when working with specific examples one often adopts more suggestive notation such as $E$ for an enzyme, $P$ for a protein, etc.

We provide an example to demonstrate the terminology.

\begin{example}\label{example:8889900}
If in our system we  have only three species, which we denote by $S_1,$ $S_2$, and $S_3$, and the only transition type we allow is the merging of an  $S_1$ and an $S_2$ molecule to form an $S_3$ molecule, then we may depict this network by the directed graph
\begin{equation*}
	S_1 + S_2 \to S_3.
\end{equation*}
For this very simple model our network consists of species $\S = \{S_1,S_2,S_3\}$, complexes $\C = \{S_1+S_2,\ S_3\}$, and reactions $\Re = \{S_1+S_2 \to S_3\}$.  \hfill $\triangle$
\end{example}
%
%

 We now define the two most popular modeling choices for reaction networks: the discrete-space, continuous-time   Markov chain model and the deterministic model.
 
 \vspace{.2in}

\noindent \textbf{Discrete-space, continuous-time Markov chain model.} 
 The usual stochastic model for a reaction network
  treats the system as a continuous-time
Markov chain whose state $X_t\in \Z^d_{\ge 0}$ is a vector whose $i$th component gives the abundance of
species $S_i$ at time $t\ge 0$, and  with each reaction modeled as a
possible transition of the chain.    For the $k$th reaction, we let $y_k\in \Z^d_{\ge0}$ and $y_k'\in \Z^d_{\ge0}$ be the vectors whose $i$th components gives the multiplicity of species $i$ in the source and product complexes, respectively, and let $\lambda_k:\Z^d_{\ge 0} \to \R_{\ge 0}$  give the \emph{transition intensity}, or rate,
at which the reaction occurs.  The transition intensities are often referred to as the \textit{propensities}.
Specifically, if the $k$th reaction occurs at time $t$, then  the old state, $X_{t-}$, is  updated
by addition of the \textit{reaction vector} $\zeta_k = y_k' - y_k$ and
\[
	X_t = X_{t-} + \zeta_k.
\]
  For example, for the
reaction $S_1 + S_2 \to S_3$, we have 
\[
	y_k = \left[ \begin{array}{c}
		1\\
		1\\
		0
		\end{array}\right], \quad  y_k'  = \left[ \begin{array}{c}
		0\\
		0\\
		1
		\end{array}\right],\quad \text{ and }\quad \zeta_k =  \left[ \begin{array}{r}
		-1\\
		-1\\
		1
		\end{array}\right].
		\]
We now assume that $X_t$ is a continuous-time Markov chain on $\Z^d_{\ge 0}$ with transition rates
\[
	q(x,x') = \sum_{k : \zeta_k = x' - x} \lambda_k(x),
\]
where the sum is over all reactions with reaction vector equal to $x' - x$.  The reason for the sum is that different reactions can gave the same reaction vector.  For example, the reactions $S_1 \to S_2$ and $2S_1 \to S_1 + S_2$ have the same reaction vector.  The most common form for the intensity functions $\lambda_k$ is given by \textit{stochastic mass action kinetics}, in which case  
\begin{equation}\label{eq:stochasticMA}
	\lambda_k(x) = \kappa_k \prod_{i=1}^d \frac{x_{i}!}{(x_{i}  - y_{ki})!}1_{\{x_i \ge y_{ki}\}},\quad x \in \Z^d_{\ge 0},
\end{equation}
 where $y_k$ is the source complex and $\kappa_k \in \R_{\ge 0}$ is the rate constant.

\vspace{.1in}

\noindent \textit{Other ways to characterize  the stochastic model.}
The model described above  is a
continuous-time Markov chain in $\Z^d_{\ge 0}$ with infinitesimal generator
\begin{equation}\label{eq:generator}
  ({\mathcal A} f)(x) = \sum_{k} \lambda_k(x)(f(x + \zeta_k) - f(x)),
\end{equation}
where $f : \Z^d \to \R$ \cite{AK2015,Kurtz86}.  Kolmogorov's forward equation, termed the {\em chemical master equation} in much of the biology and chemistry literature,
for this class of models is \cite{AndKurtz2011, anderson2015stochastic,gillespie1992rigorous}
\begin{equation}\label{eq:CME}
  \frac{d}{dt} P_\mu(x, t) = \sum_k \lambda_k(x-\zeta_k)  P_\mu(x-
  \zeta_k,t) 1_{\{x - \zeta_k \in \Z^d_{\ge 0}\}}- \sum_k \lambda_k(x) P_\mu(x,t), 
\end{equation}
where $P_\mu(x,t)$ represents the probability that $X_t = x\in \Z^d_{\ge 0}$, given an initial distribution of $\mu$.  Note that there is one such equation \eqref{eq:CME} for each state in the system (so there are often an infinite number of equations). So long as the process is non-explosive, the different characterizations for the relevant processes  are equivalent \cite{AndKurtz2011,AK2015,Kurtz86}.

\vspace{.2in}

\noindent \textbf{Deterministic model.}  The usual deterministic model with mass action kinetics is the solution to the following ordinary differential equation in $\R^d_{\ge 0}$
\begin{equation}\label{eq:mass-action}
   \frac{d}{dt}{c}(t) = \sum_k \kappa_k  c(t)^{y_{k}}(y_k' - y_k),
\end{equation}
where for two vectors $u,v \in \R^d_{\ge 0}$ we define $u^v \equiv
\prod_i u_i^{v_i}$ and adopt the convention that $0^0=1$. 

\begin{definition}
An equilibrium value $c\in \R_{\ge 0}^d$ is said to be \textit{complex balanced} if for each complex $z\in \C$, 
\[
	\sum_{k: y_k = z} \kappa_k  c^{z} = \sum_{k:y_k^\prime = z} \kappa_k  c^{y_{k}},
\]
where the sum on the left (respectively, right) is over those reactions with source (respectively, product) complex $z$.
\end{definition}

Here we will introduce a new definition, which is closely related to that of a complex balanced equilibrium.   Below and throughout, we denote the 1-norm of a vector $u$ by $\|u\|_1 = \sum_i |u_i|$.
\begin{definition}\label{def:DR}
We say that a solution $c(t)$ to the deterministic dynamics in \eqref{eq:mass-action} satisfies the \textit{dynamical and restricted} (DR, for short) \textit{complex balance condition} if the following holds:   for all complexes $z\in \C$ with $\|z\|_1\ge 2$ and all $t \ge 0$,  
\begin{align}\label{eq:complex_balance}
 	 \sum_{k:y_k = z} \kappa_k c(t)^{z}=\sum_{k:y_k^\prime = z } \kappa_k c(t)^{y_k},
\end{align}
where the sum on the left (respectively, right) is over those reactions with source (respectively, product) complex $z$.
\end{definition} 

\edited{\begin{remark}\label{remark:trivial}
Note that if a reaction network is weakly reversible and if the rate constants are chosen so that the equilibrium concentration $\tilde c$ is  complex balanced, then if we choose $c(0) = \tilde c$ (the complex balanced equilibrium) we  have that $c(t) = \tilde c$ for all $t \ge 0$.  These time-independent solutions are not of interest to us, and we call such solutions  \textit{constant solutions} throughout.\hfill $\triangle$
\end{remark}
}
Thus, the DR conditions is the same as the complex balanced condition except it allows for time dependence (i.e., is dynamical) and is restricted to those complexes that have non-linear intensity functions.  Note that the DR condition holds trivially in the case that $\|z\|_1\le 1$ for all $z \in \mathcal{C}$. An important implication of DR condition is made explicit in Lemma \ref{newlemma}, whose proof is relegated to Appendix \ref{appendix:A}.

\begin{lemma}\label{newlemma}
Consider a reaction network endowed with deterministic mass action kinetics.  Let $c(t)$ be the solution to the system \eqref{eq:mass-action}.  If  for $\tilde c = c(0)\in \R^d_{>0}$ we have that $c(t)$ satisfies the DR condition of Definition \ref{def:DR}, then, for this particular choice of initial condition,  the right-hand side of \eqref{eq:mass-action} is linear and  $c(t) \in \R^d_{>0}$ for all $t \ge 0$.
\end{lemma}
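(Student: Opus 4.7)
The plan is to use the DR identity to algebraically rewrite the right-hand side of \eqref{eq:mass-action} as an affine function of $c(t)$, and then to conclude strict positivity from the well-known positivity-preservation of linear ODEs with Metzler coefficients.

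First, regrouping the ODE by source and product complex gives
\[
\dot c(t)=\sum_{z\in\C} z\bigl(A_z(t)-\gamma_z c(t)^z\bigr),\qquad A_z(t)=\sum_{k:\,y_k'=z}\kappa_k c(t)^{y_k},\quad \gamma_z=\sum_{k:\,y_k=z}\kappa_k.
\]
The DR condition kills the bracket whenever $\|z\|_1\ge 2$, and the $z=0$ summand contributes zero, so
\[
\dot c(t)=\sum_{i=1}^d e_i\bigl(A_{e_i}(t)-\gamma_{e_i}c_i(t)\bigr).
\]
The subtractive piece $\gamma_{e_i}c_i(t)$ is already linear in $c(t)$, so the whole task reduces to showing that each $A_{e_i}(t)$ is affine in $c(t)$.

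The key device for that is a recursive substitution using DR. Letting $Z_2=\{z\in\C:\|z\|_1\ge 2\}$, any nonlinear summand $\kappa_k c(t)^{y_k}$ appearing in $A_{e_i}$ (those with $y_k\in Z_2$) can be rewritten via DR as $\kappa_k A_{y_k}(t)/\gamma_{y_k}$; the denominator is nonzero because if $\gamma_z=0$ for some $z\in Z_2$, then DR at $t=0$ combined with $c(0)\in\R^d_{>0}$ would force every incoming rate constant at $z$ to vanish, and such a ``dead'' complex may be excised from the network. Substituting this replacement into each successive $A_z$ and encoding the result in matrix form yields the linear system $(I-T|_{Z_2})\vec A=\vec L$, where $T_{z,z'}=\gamma_{z'}^{-1}\sum_{k:\,y_k=z',\,y_k'=z}\kappa_k$ is column-substochastic and $\vec L$ is explicitly linear in $c(t)$. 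Whenever the spectral radius of $T|_{Z_2}$ is strictly less than one, $I-T|_{Z_2}$ is invertible and $\vec A$ (hence each $A_{e_i}$) is determined as a linear function of $c(t)$.

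The main obstacle is a closed block $W\subseteq Z_2$ from which no reaction exits to $Z_2^c$, which is exactly where $I-T|_{Z_2}$ becomes singular. I would handle that case by summing DR over all $z\in W$: the total outflow from $W$ to $Z_2^c$ is identically zero by closedness, hence so is the total inflow from $Z_2^c$, and strict positivity of $c(t)$ forces every rate constant on reactions entering $W$ to vanish, disconnecting $W$ entirely from the rest of the network. A short telescoping argument using DR within $W$ then shows $\sum_{k:\,y_k\in W}\kappa_k c(t)^{y_k}(y_k'-y_k)=0$, so $W$ does not contribute to $\dot c(t)$. Assembling the transient and closed pieces yields $\dot c(t)=Mc(t)+b$ with $M$ Metzler and $b\ge 0$, and standard positivity-preservation for such systems together with $c(0)\in\R^d_{>0}$ delivers $c(t)\in\R^d_{>0}$ for all $t\ge 0$, retroactively justifying the strict positivity used during the substitution steps via a maximal-interval-of-positivity argument.
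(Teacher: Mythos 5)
Your argument is correct, and at its core it is the same as the paper's: both proofs use the DR identity to set up a linear system that expresses the higher-order monomials $c(t)^z$, $\|z\|_1\ge 2$, as linear combinations of the first-order ones, and both must neutralize the degenerate case in which that system is singular. The differences are in execution. The paper first decomposes the dynamics by linkage class and, within a mixed linkage class, proves invertibility of its coefficient matrix $A$ by showing $A^T$ is weakly diagonally dominant and satisfies the reachability condition of Lemma \ref{lemma:WCDD}; the degenerate case is excluded by contradiction (a ``trapped'' set of higher-order complexes would have to be a linkage class on its own). You work globally over $\C$, phrase invertibility as $\rho(T)<1$ for a column-substochastic matrix --- which is equivalent to the paper's criterion, since the paper's matrix factors as $A=(I-T)\,\mathrm{diag}(\gamma)$ with $\gamma_{z'}>0$ --- and treat the closed block $W$ constructively: summing DR over $W$ and using $c(0)\in\R^d_{>0}$ kills the rate constants of all reactions entering $W$, so $W$ is disconnected and its monomials never feed into any $A_{e_i}$. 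This absorbs the paper's Case 1 and its Case 3 contradiction into a single argument (one should state the obstruction precisely as a communicating class from which \emph{no} reaction exits, not merely none exiting to $Z_2^c$, but your subsequent use of closedness is the correct one). For positivity, the paper argues first, proving $c(t)\in\R^d_{>0}$ on a maximal interval $[0,T^*)$ via a componentwise differential inequality and then deducing $T^*=\infty$ from linearity; you argue last, reading positivity off the Metzler structure, which is clean because $(I-T)^{-1}=\sum_{n\ge 0}T^n\ge 0$ guarantees nonnegative coefficients. Note that the only place your substitution steps use strict positivity is at $t=0$ (to kill rate constants into dead or closed complexes), so the ``retroactive justification'' you flag is not actually needed and there is no circularity.
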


\edited{ The previous lemma gives us one feasible approach to check whether the DR condition holds for a given model. Specifically if the DR condition holds, then by Lemma \ref{newlemma} the system governing the dynamics of $c(t)$ is linear and can therefore be solved explicitly. We can then check whether  the solution so found satisfies the DR condition \eqref{eq:complex_balance}. We will utilize this idea in the following examples and in Section \ref{sec:pr_examples}.  } 

\begin{example}\label{ex_t}
Consider the reaction network with the following network diagram,
\begin{align*}
   2X \xrightleftharpoons[\quad \kappa_2 \quad ]{\kappa_1} 2Y, \quad
   \emptyset  \xrightleftharpoons[\quad \kappa_4 \quad ]{\kappa_3} X, \quad
     \emptyset  \xrightleftharpoons[\quad \kappa_6 \quad ]{\kappa_5} Y,
\end{align*}
where the rate constants are placed next to their respective reaction arrow.
 Notice that $2X$ and $2Y$ are the only complexes that need to be considered in Definition \ref{def:DR}.  The DR condition for both complexes simplifies to the same equation 
\begin{equation}\label{DRcond1}
\kappa_1 x(t)^2  = \kappa_2 y(t)^2
\end{equation}
where $x(t),y(t)$ is the solution to the associated deterministic model \eqref{eq:mass-action}. For the DR condition to be satisfied, we utilize \eqref{DRcond1} in the deterministic model to get
\begin{equation}
\begin{split}\label{ex1_deq}
\frac{dx}{dt} &= -2 \kappa_1 x^2 + 2\kappa_2 y^2 + \kappa_3 - \kappa_4 x  = \kappa_3 - \kappa_4 x,  \qquad \qquad x(0) = x_0  \\
\frac{dy}{dt} &= \hspace{.1in} 2 \kappa_1 x^2 - 2\kappa_2 y^2 + \kappa_5 - \kappa_6 y  = \kappa_5 - \kappa_6 y,  \qquad \qquad y(0) = y_0.
\end{split}\end{equation}
Notice that the system of linear equations \eqref{ex1_deq} has become decoupled, and we can solve them exactly:
\begin{equation}\begin{split}\label{ex1_sol}
x(t) & = \left( x_0 - \frac{\kappa_3}{\kappa_4} \right)e^{-\kappa_4 t}  + \frac{\kappa_3}{\kappa_4} \\
y(t) & = \left( y_0 - \frac{\kappa_5}{\kappa_6} \right)e^{-\kappa_6 t}  + \frac{\kappa_5}{\kappa_6}.
\end{split}\end{equation}
There are two cases to consider.
\begin{enumerate}
\item Suppose $x(0)  = \frac{\kappa_3}{\kappa_4}$.  Then $x(t) = \frac{\kappa_3}{\kappa_4}$ for all time $t\geq 0$. By \eqref{DRcond1},  we must then have
\[
	y(t) =\sqrt{\frac{\kappa_1}{\kappa_2}}  x(t)= \frac{\kappa_3}{\kappa_4}\sqrt{\frac{\kappa_1}{\kappa_2}}.
\]
By \eqref{ex1_sol}, this only holds true if 
$$y_ 0 =  \frac{\kappa_5}{\kappa_6}  = \frac{\kappa_3}{\kappa_4}\sqrt{\frac{\kappa_1}{\kappa_2}}$$
Notice that in this case, both $x(t)$ and $y(t)$ start at complex balanced equilibrium and stay constant for all time $t\geq 0$.   Hence, this case is trivial as noted in Remark \ref{remark:trivial}.  A similar result holds if we had assumed $y_0 = \kappa_5/\kappa_6$.
\item Now suppose that neither $x(t)$ and $y(t)$ start at their complex balanced equilibriums.  By taking the solution \eqref{ex1_sol}, plugging it back into \eqref{DRcond1}, and matching terms, we find that  the rate constants need to satisfy  the following conditions for the DR condition to hold
\begin{align} \label{eq:DRDRDR}
\kappa_4 = \kappa_6, \quad \frac{\sqrt{\kappa_1}}{\sqrt{\kappa_2}} = \frac{\kappa_5}{\kappa_3} = \frac{y_0}{x_0}.
\end{align}
For example, taking 
\[
x_0 = 1, \quad y_0 = 2, \quad \kappa_1 = 4,\quad \kappa_2 = 1,\quad \kappa_3 = 1,\quad \kappa_4 = \frac12,\quad \kappa_5 = 2,\quad \text{and}\quad \kappa_6 = \frac12,
\]
yields the solution
\begin{align*}
	x(t) &= 2 - e^{-t/2}\\
	y(t) &= 4 - 2 e^{-t/2},
\end{align*}
which one can readily check satisfies both the deterministic ODEs \eqref{ex1_deq} and the DR condition \eqref{DRcond1}.  
\end{enumerate}
Hence, if the rate constants and the initial condition satisfy \eqref{eq:DRDRDR}, then the deterministic solution will satisfy the DR condition \eqref{DRcond1}. For other choice of rate constants or initial conditions, there are no non-constant solutions that satisfy DR condition \eqref{DRcond1}. 
\hfill $\triangle$
\end{example}

\begin{example}\label{ex_n}
Consider the network 
\begin{equation*}\begin{split}
   X  \xrightleftharpoons[\quad \kappa_2 \quad ]{\kappa_1} 2Y, \quad
   \emptyset  \xrightleftharpoons[\quad \kappa_4 \quad ]{\kappa_3} X, \quad
    \emptyset  \xrightleftharpoons[\quad \kappa_6 \quad ]{\kappa_5} Y,
\end{split}
\end{equation*}
where the rate constants have been placed next to their respective reactions.
Note that this model is weakly reversible, and there is therefore a choice of rate constants for which it is complex balanced.  For this model, the DR condition of Definition \ref{def:DR} is
\begin{equation}\label{DRcond3}
\kappa_1 x(t)  = \kappa_2 y(t)^2
\end{equation}
where $x(t)$ and $y(t)$ are the solutions to the associated deterministic model \eqref{eq:mass-action}. To see when the DR conditions is satisfied, we utilize \eqref{DRcond3} in the deterministic model  to get
\begin{equation}
\begin{split}\label{ex3_deq}
\frac{dx}{dt} &= - \kappa_1 x + \kappa_2 y^2 + \kappa_3 - \kappa_4 x  = \kappa_3 - \kappa_4 x  \qquad \qquad x(0) = x_0  \\
\frac{dy}{dt} &= \hspace{.1in} 2 \kappa_1 x - 2\kappa_2 y^2 + \kappa_5 - \kappa_6 y  = \kappa_5 - \kappa_6 y  \qquad \qquad y(0) = y_0.
\end{split}\end{equation}
Notice that the system of linear equation \eqref{ex3_deq} is exactly the same as the system \eqref{ex1_deq}, and we have
\begin{equation}\begin{split}\label{ex3_sol}
x(t) &= \left( x_0 - \frac{\kappa_3}{\kappa_4} \right)e^{-\kappa_4 t}  + \frac{\kappa_3}{\kappa_4} \\
y(t) &= \left( y_0 - \frac{\kappa_5}{\kappa_6} \right)e^{-\kappa_6 t}  + \frac{\kappa_5}{\kappa_6}. 
\end{split}\end{equation}

We will now demonstrate that there is not choice of parameters, except in the trivial case, that will satisfy \eqref{DRcond3}.  As before, there are two cases that need consideration.
\begin{enumerate}
\item Suppose $x(0)  = \frac{\kappa_3}{\kappa_4}$.  Then $x(t) = \frac{\kappa_3}{\kappa_4}$ for all time $t\geq 0$. By \eqref{DRcond3}, we must then have
$$y(t) =\sqrt{\frac{\kappa_1}{\kappa_2}x(t)} = \sqrt{\frac{\kappa_1\kappa_3}{\kappa_2\kappa_4}}. $$
By \eqref{ex3_sol}, the above only holds true if 
$$y_ 0 =  \frac{\kappa_5}{\kappa_6}  =  \sqrt{\frac{\kappa_1\kappa_3}{\kappa_2\kappa_4}}   $$
Notice that in this case, both $x(t)$ and $y(t)$ start at complex balanced equilibrium and stay constant for all time $t\geq 0$.  Hence, this is the trivial case discussed in Remark \ref{remark:trivial}.  A similar result is found if one assumes first that $y_0 = \frac{\kappa_5}{\kappa_6}$.
\item Suppose now that neither $x(t)$ nor $y(t)$ starts at its equilibrium.  We then take the solution \eqref{ex3_sol} and plug it back into \eqref{DRcond3}, yielding
\begin{align*}
\kappa_1 \left( \left( x_0 - \frac{\kappa_3}{\kappa_4} \right)e^{-\kappa_4 t}  + \frac{\kappa_3}{\kappa_4} \right) & = \kappa_2 \left( y_0 - \frac{\kappa_5}{\kappa_6} \right)^2 e^{-2\kappa_6 t}+ 2\kappa_2  \frac{\kappa_5}{\kappa_6} \left( y_0 - \frac{\kappa_5}{\kappa_6} \right)e^{-\kappa_6 t} + \kappa_2 \frac{\kappa_5^2}{\kappa_6^2}.
\end{align*}
The key observation is that in order to balance the three exponential terms, one of them \textit{must} have a coefficient that is zero.  However, this would imply that we are back in case 1. 
\end{enumerate}
Hence, there are no non-constant solutions which satisfy DR condition \eqref{DRcond3}. 
\hfill $\triangle$
\end{example}

\section{Motivation and results}
\label{sec:PR}

\subsection{Motivation from the physics literature}

In the physics literature, there is an alternative representation for the solution to the chemical master equation \eqref{eq:CME} and is given by Gardiner's \emph{Poisson representation} (PR) \cite{gardiner1977poisson}.   \edited{We will present this representation here, and show a conjecture it implies, since they  served as the motivation for the present work.}

One form of the PR (the ``positive PR'' \cite{gardiner1985stochastic}) can be derived by first making the following ansatz for $P_\mu(x,t)$ from \eqref{eq:CME}:
\begin{align}\label{eq:pr_ansatz}
  P_{\mu}(x,t)
  & =
    \int_{\mathbb{C}^d} \prod_{i=1}^d \mathcal{P}(x_i; u_i)   \pi_{\nu}(u,t) d u , \quad u=(u_1, \ldots, u_d),
\end{align}
where  $\mathcal{P}(x_i;u_i)=(e^{-u_i}u_i^{x_i})/x_i!$ is a Poisson distribution in $x_i$ with mean $u_i$, and where $\pi_\nu(\cdot, \cdot)$ is a function on $\mathbb{C}^d\times \R_{\ge 0}$ satisfying $\pi_\nu(u,0) = \nu(u)$. Note that the integrals in  \eqref{eq:pr_ansatz}  are taken over the whole complex plane for each $u_i$. Under certain conditions one can use the ansatz  \eqref{eq:pr_ansatz}, together with the chemical master equation  \eqref{eq:CME}, to derive an evolution equation for $\pi_{\nu}(u,t)$  \cite{gardiner1977poisson}.  
Specifically, under the further assumption that for each complex $y \in \C$ we have $\|y\|_1 \le 2$ (i.e.~the system is binary),
  one can formally derive that $\pi_{\nu}(u,t)$ fulfills the Fokker-Planck equation \cite{gardiner1977poisson}
\begin{align}\label{eq:pr_fpe}
  \frac{\partial}{\partial t} \pi_{\nu}(u,t)
  & = 
    - \sum_{i=1}^d \frac{\partial}{\partial u_i} \left[ A_i (u) \pi_{\nu}(u, t) \right]
    + \frac{1}{2} \sum_{i,j=1}^d \frac{\partial}{\partial u_i} \frac{\partial}{\partial u_j}
     \left[ B_{ij}(u) \pi_{\nu}(u, t) \right],
\end{align}
with drift vector $A(u)$ and diffusion matrix $B(u)$ given by
\begin{align}
\label{pr_drift}
  A_i(u)
  & = 
    \sum_k \kappa_k u^{y_k} \zeta_{ki}, \\
\label{pr_diff}
  B_{ij}(u)
  & =
    \sum_k \kappa_k u^{y_k} (y'_{ki} y'_{kj}- y_{ki}y_{kj}- \delta_{i,j} \zeta_{ki}),
\end{align}
where $\delta_{i,j} $ denotes the Kronecker delta, and where the initial condition is $\pi_\nu(u,0) = \nu(u)$.

Now suppose that $B(u) \equiv 0$ and that the initial condition satisfies $\nu(u) = \delta(u-u^0),$ i.e.~is the Dirac delta function, for some constant $u^0 \in \Z^d_{\ge 0}$.  Note that, from \eqref{eq:pr_ansatz}, having $\nu(u) = \delta(u-u_0)$ corresponds to a product of Poissons for an initial distribution of the process $X_t$, i.e.~$P_{\mu}(x,0) = \mu(x) = \prod_{i=1}^d \mathcal{P}(x_i; u^0_i)$.  Now note that because $B(u) \equiv 0$  the equation for $\pi_\nu$ in \eqref{eq:pr_fpe} reduces to a Liouville equation and $\pi_{\nu}$ remains a delta distribution for all times centered around the deterministic process $u(t)$, which fulfills the ordinary differential equation \eqref{eq:mass-action}. This means that $X_t$ has  a distribution given by a product of Poissons  for all times: $P_{\mu}(x,t)  = \prod_{i=1}^d \mathcal{P}(x_i; u_i(t))$.

Collecting thoughts, we have shown that the PR representation in the physics literature implies the following conjecture. 

\begin{conjecture}\label{conjecture}
  Suppose that the following three conditions hold:
  \begin{enumerate}[(i)]
  \item the reaction network is binary, i.e.~$\|y\|_1\le 2$ for each complex, 
  \item  the initial distribution of the stochastically modeled reaction network is a product of Poissons,
  \item  $B(u(t)) = 0$,  where $u(t)$ solves \eqref{eq:mass-action} and $B$ is as in \eqref{pr_diff}.
  \end{enumerate}
  Then the distribution of the process $X_t$ is a product of Poissons for all time.  
  \end{conjecture}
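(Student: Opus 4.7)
The plan is to verify that the product-of-Poissons ansatz with time-dependent means $u(t)$ actually solves the chemical master equation \eqref{eq:CME}, and then invoke uniqueness of the forward equation. The argument splits into (a) identifying the hypothesis $B(u(t))\equiv 0$ with the DR condition along $u(t)$, and (b) reducing the CME acting on the ansatz to a polynomial identity in $s$ whose coefficients encode mass action together with DR.

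For (a), I would unwind \eqref{pr_diff}: the diagonal summand contains $y'_{ki}(y'_{ki}-1)-y_{ki}(y_{ki}-1)$ while for $i\ne j$ the summand contains $y'_{ki}y'_{kj}-y_{ki}y_{kj}$. Define the symmetric matrix $M(z)$ by $M(z)_{ii}=z_i(z_i-1)$ and $M(z)_{ij}=z_iz_j$ for $i\ne j$, and set
$$A_z(c):=\sum_{k:\,y_k'=z}\kappa_k c^{y_k}-\sum_{k:\,y_k=z}\kappa_k c^{z}.$$
A rearrangement then gives $B_{ij}(u)=\sum_z A_z(u)\,M(z)_{ij}$. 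Since $M(z)=0$ whenever $\|z\|_1\le 1$ and in the binary case only complexes with $\|z\|_1\le 2$ occur, $B(u)=\sum_{\|z\|_1=2}A_z(u)\,M(z)$. The matrices $M(2e_i)$ (supported at $(i,i)$) and $M(e_i+e_j)$ for $i<j$ (supported at $(i,j)$ and $(j,i)$) are linearly independent, so $B(u(t))\equiv 0$ is equivalent to $A_z(u(t))\equiv 0$ for every $z$ with $\|z\|_1\ge 2$---precisely DR along $u(t)$.

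For (b), I would pass to the generating function $G(s,t)=\exp\bigl(\sum_i c_i(t)(s_i-1)\bigr)$, which satisfies $\partial^{y_{ki}}G/\partial s_i^{y_{ki}}=c_i^{y_{ki}}G$. The standard PDE form of \eqref{eq:CME} under mass-action, $\partial_t G=\sum_k\kappa_k(s^{y_k'}-s^{y_k})\,\partial^{y_k}G/\partial s^{y_k}$, reduces to the polynomial identity
$$\sum_i\dot c_i(t)(s_i-1)=\sum_z s^{z}\,A_z(c(t)).$$
Matching coefficients of each $s^z$ yields three conditions: $A_z(c)=0$ for $\|z\|_1\ge 2$ (DR); $\dot c_j=A_{e_j}(c)$ for each $j$; and $-\sum_i\dot c_i=A_0(c)$, which is automatic from the other two via the identity $\sum_z A_z(c)\equiv 0$ (each reaction $k$ contributes $\kappa_k c^{y_k}$ once positively and once negatively). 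Finally, rewriting \eqref{eq:mass-action} as $\dot c_j=\sum_z z_j A_z(c)$, under DR it collapses to $\dot c_j=A_{e_j}(c)$---so the ansatz solves the CME if and only if $c(t)$ satisfies both mass action and DR.

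Assembling everything under the hypotheses of Conjecture \ref{conjecture}: step (a) gives DR along $u(t)$, Lemma \ref{newlemma} then forces the mass-action dynamics to be linear from the initial datum $u(0)\in\R^d_{>0}$ with $u(t)\in\R^d_{>0}$ globally, and step (b) shows that $P(x,t)=\prod_i\mathcal{P}(x_i;u_i(t))$ solves \eqref{eq:CME} with the prescribed Poisson initial distribution. Uniqueness for Kolmogorov's forward equation---together with a non-explosiveness check, which follows because the exhibited probability solution has total mass $1$ at all times and first moment $u(t)$ bounded by linearity---then identifies the ansatz with the distribution of $X_t$. The main obstacle is justifying the generating-function manipulation rigorously and handling non-explosiveness; both can be circumvented by substituting the ansatz directly into \eqref{eq:CME} using the identity $\lambda_k(x)P(x,t)=\kappa_k c(t)^{y_k}P(x-y_k,t)$ (valid for product-Poissons) and matching falling-factorial monomials in $x$ to recover the same polynomial identity.
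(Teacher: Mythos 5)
Your proposal is correct and follows essentially the same route as the paper: your part (a) is exactly the paper's Lemma \ref{lemma} (the decomposition $B_{ij}(u)=\sum_z f_{ij}(z)\,A_z(u)$ with $f_{ij}$ supported on $z=e_i+e_j$), and your part (b) --- whether phrased through the generating function or through your direct-substitution fallback using $\lambda_k(x)P(x,t)=\kappa_k c(t)^{y_k}P(x-y_k,t)$ --- is the paper's Lemma \ref{Calculation} combined with the linear independence of the complex-indexed contributions (Lemma \ref{Function_Independent}), with Lemma \ref{newlemma} supplying positivity and linearity of $c(t)$ exactly as you invoke it. The one spot where the paper is more careful is uniqueness of the forward-equation solution, which it delegates to a cited result rather than to a total-mass-one argument, but this does not change the structure of the proof.
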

  
Note that we trivially have  $B(u) =0$ for all $u$ if the model is \edited{first-order} (i.e.~if $\|y\|_1 \le 1$ for each $y \in \C$).  

\edited{In the remaining sections}, we will show that Conjecture \ref{conjecture} is correct.    \edited{In fact, we do more:  we derive \textit{necessary and sufficient conditions} that characterize when a model can admit a distribution that is a product of Poissons for all time.  However, we explicitly point out here that we will do so without using the Poisson representation of \eqref{eq:pr_ansatz} or the Fokker-Planck equation \eqref{eq:pr_fpe}, as \eqref{eq:pr_fpe} only follows from \eqref{eq:pr_ansatz} under heuristic methods that, to the best of our knowledge, are not mathematically justified.}



\subsection{Main results}

Our main result, Theorem \ref{main_theorem},  shows that a stochastically modeled reaction network has a product-form distribution for all time if and only if the initial distribution is a product of Poissons and the DR condition from Definition \ref{def:DR} holds for the associated deterministic model.

\begin{theorem}\label{main_theorem}
Consider a stochastically modeled reaction network with  intensity functions given by stochastic mass action kinetics \eqref{eq:stochasticMA}.     Suppose that $X_0$ has a distribution that is a product of Poissons, i.e. there is a $\tilde{c} \in \R_{> 0}^d$ for which
\begin{align}\label{initial_distribution}
\mu (x) = \prod_{i=1}^d  e^{-\tilde{c}_i}\frac{\tilde{c}_i^{x_i}}{x_i ! }, \ \ \text{  for $x \in \Z_{\ge 0}^d$},
\end{align}
where $\mu(x) =  P_\mu(X_0 = x)$.
Then the following three statements are equivalent: 
\begin{enumerate}[(i)]
\item the solution to the  ODE \eqref{eq:mass-action} with $c(0) = \tilde c$ satisfies the DR condition of Definition \ref{def:DR};

\item  the solution to the chemical master equation $P_\mu(x,t)$ satisfies 
\begin{align}\label{poissonAT}
P_\mu(x,t) =  \prod_{i=1}^d  e^{-c_i(t)}\frac{c_i(t)^{x_i}}{x_i ! }, \ \  \text{        for $x\in \Z_{\ge 0}^d$ and all $t \ge 0$},
\end{align}
for some deterministic process $c(t)$ with $c(0)  = \tilde c$;

  \item  the solution to the chemical master equation $P_\mu(x,t)$ satisfies 
\begin{align}\label{7697679}
P_\mu(x,t) =  \prod_{i=1}^d  e^{-c_i(t)}\frac{c_i(t)^{x_i}}{x_i ! }, \ \  \text{        for $x\in \Z_{\ge 0}^d$ and all $t \ge 0$},
\end{align}
for $c(t)$ satisfying \eqref{eq:mass-action} with $c(0) = \tilde c$.
\end{enumerate}
\end{theorem}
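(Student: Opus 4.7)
The plan is to pass through the probability generating function, which converts the chemical master equation \eqref{eq:CME} into a single polynomial identity once we insert the Poisson ansatz. The implication (iii) $\Rightarrow$ (ii) is trivial, so the substantive work is (ii) $\Rightarrow$ (i) and (i) $\Rightarrow$ (iii), both of which come out of this identity.

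First I would set $G(z,t) := \sum_{x \in \Z^d_{\ge 0}} P_\mu(x,t)\, z^x$; a short index shift together with the identity $(x)_{y_k} z^x = z^{y_k}\partial_z^{y_k} z^x$ (where $(x)_{y_k}=\prod_i x_i!/(x_i-y_{ki})!$) rewrites the CME as
\begin{equation*}
\partial_t G(z,t) = \sum_k \kappa_k \bigl(z^{y_k'} - z^{y_k}\bigr)\,\partial_z^{y_k} G(z,t).
\end{equation*}
For the product-Poisson ansatz one has $G(z,t) = e^{(z-1)\cdot c(t)}$ and $\partial_z^{y_k} G = c(t)^{y_k} G$, so after dividing through by $G$ the PDE reduces to the polynomial identity
\begin{equation*}
\sum_{i=1}^d \dot c_i(t)\,(z_i-1) = \sum_k \kappa_k\, c(t)^{y_k}\bigl(z^{y_k'} - z^{y_k}\bigr), \tag{$\ast$}
\end{equation*}
and grouping the right-hand side by complex $\eta \in \C$ rewrites it as $\sum_{\eta} z^\eta\bigl[\sum_{k:y_k'=\eta}\kappa_k c(t)^{y_k} - \sum_{k:y_k=\eta}\kappa_k c(t)^\eta\bigr]$. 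Since the left-hand side is a polynomial of degree at most one in $z$, $(\ast)$ holds if and only if (a) every coefficient of $z^\eta$ with $\|\eta\|_1 \ge 2$ on the right vanishes, and (b) the degree-$\le 1$ coefficients on both sides agree.

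For (ii) $\Rightarrow$ (i), condition (a) is literally the DR condition of Definition \ref{def:DR} evaluated along $c(t)$, and matching the degree-$\le 1$ coefficients forces $c(t)$ to solve a linear system that under DR coincides with the deterministic ODE \eqref{eq:mass-action} by Lemma \ref{newlemma}, giving (iii) as well. For (i) $\Rightarrow$ (iii), Lemma \ref{newlemma} guarantees that with $c(0)=\tilde c$ the deterministic ODE is linear and that $c(t) \in \R^d_{>0}$ for all $t \ge 0$, so the Poisson ansatz is well-defined. The DR condition kills the degree-$\ge 2$ part of the right-hand side of $(\ast)$; the complex-grouped form of \eqref{eq:mass-action} supplies the degree-one matching; and the constant-term equation holds automatically because summing the DR identity over all complexes of 1-norm at least two yields $\sum_{k:\|y_k\|_1\le 1}\kappa_k c(t)^{y_k}=\sum_{k:\|y_k'\|_1\le 1}\kappa_k c(t)^{y_k}$. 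Hence the Poisson ansatz satisfies the CME with the correct initial data, and uniqueness of solutions to the forward equation identifies it with $P_\mu(x,t)$.

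The main obstacle is the bookkeeping in the coefficient matching, in particular verifying that the constant-term equation is satisfied automatically under DR; once $(\ast)$ is organized complex-by-complex, however, the rest of the argument amounts to reading off coefficients.
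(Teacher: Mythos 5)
Your proposal is correct, and it reaches the same two key facts as the paper --- that the Poisson ansatz solves the forward equation if and only if $c$ solves \eqref{eq:mass-action} and the complex-by-complex differences $\sum_{k:y_k'=\eta}\kappa_k c^{y_k}-\sum_{k:y_k=\eta}\kappa_k c^{y_k}$ vanish for every $\eta$ with $\|\eta\|_1\ge 2$ --- but by a genuinely different route. The paper works pointwise in $x$: inserting the ansatz into \eqref{eq:CME} yields the identity $\sum_k\kappa_k c(t)^{y_k}\bigl[g_{x,c(t)}(y_k')-g_{x,c(t)}(y_k)\bigr]=0$ for an explicit family of functions $g_{x,c}$ that vanish on complexes of order at most one (Lemma \ref{Calculation}), and it then needs a separate linear-independence argument in $x$ (Lemma \ref{Function_Independent}) to split this single identity into one equation per higher-order complex. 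Your generating-function formulation makes both steps transparent: distinct complexes give distinct monomials $z^\eta$, so the splitting is immediate, and the degree-$\le 1$ matching versus the vanishing of the degree-$\ge 2$ coefficients is visibly the decomposition into ``ODE'' and ``DR.'' Your handling of the constant term is also right: summing the DR identities over all $\eta$ with $\|\eta\|_1\ge 2$ and subtracting from $\sum_k\kappa_k c^{y_k}=\sum_k\kappa_k c^{y_k}$ gives exactly the needed balance among the low-order complexes. Two small points should be made explicit if you write this up. First, in (ii) $\Rightarrow$ (i) you differentiate $G(z,t)=e^{(z-1)\cdot c(t)}$ in $t$, which presupposes that $c$ is differentiable, while statement (ii) only posits some deterministic $c(t)$; the paper supplies this via Proposition \ref{theorem_c} (Dynkin's formula shows $E[X(t)]=c(t)$ solves \eqref{eq:mass-action}), and alternatively you can note that $c_i(t)=P_\mu(e_i,t)/P_\mu(0,t)$ inherits differentiability from the forward equation. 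Second, passing between \eqref{eq:CME} and the PDE for $G$ requires interchanging $\partial_t$ with the infinite sum over $x$ (and extracting coefficients in the converse direction); this is routine for Poisson tails but should be justified, whereas the paper's pointwise computation, ending with the same appeal to uniqueness of solutions of the forward equation and to Lemma \ref{newlemma} for positivity of $c(t)$, avoids the issue entirely. Neither point is a real obstruction.
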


\edited{\begin{remark}\label{remark:trivialprob}
Similarly as in Remark \ref{remark:trivial}, if we choose $c(0) = \tilde c$ (the complex balanced equilibrium) we  have that $c(t) = \tilde c$ for all $t \ge 0$ and that \eqref{7697679} also holds for all time (with $c(t) = \tilde c$) and is the stationary distribution of the stochastic model. However, these time-independent solutions are not of interest to us, and we call such solutions  \textit{constant solutions} throughout.\hfill $\triangle$
\end{remark}}

\edited{
\begin{remark}\label{remark2}
By Theorem \ref{main_theorem} above, a model satisfying the DR condition has a distribution satisfying \eqref{7697679}.  If we also have that $\lim_{t\to \infty} c(t) = C \in \R^d_{>0}$, then the model has a stationary distribution of the form \cite{AndProdForm}
\[
	\prod_{i = 1}^d e^{-C_i} \frac{C_i^{x_i}}{x_i!}, \ \ \text{ for $x \in \Z^d_{\ge 0}$}.
\]
Therefore, by results in \cite{CW2016}, the model must be complex balanced, with complex balanced equilibrium $C$.  Hence, in this case the model satisfies both the DR condition and the complex balancing condition.  Of course, this logic does not hold if there is an $i \in \{1,\dots, d\}$ for which $\lim_{t\to \infty} c_i(t) \in \{0,\infty\}$. 
\hfill $\triangle$
\end{remark}}

Before proving Theorem \ref{main_theorem}, we note that  the next logical question would be: when will the DR condition hold? The following lemma answers this question for binary networks:  the DR condition holds if and only if $B(u(t)) = 0$ where $u(t)$ solves the ODE \eqref{eq:mass-action}.

\begin{lemma}\label{lemma}
Consider a binary reaction network, i.e.~$\|y\|_1\le 2$ for all $y \in \C$.  Then the DR condition from Definition \ref{def:DR} holds for the associated deterministic model \eqref{eq:mass-action} if and only if $B(u(t)) = 0$ with $u(t)$ satisfying \eqref{eq:mass-action}. 
\end{lemma}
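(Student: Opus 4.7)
The plan is to reduce the lemma to a single algebraic identity that expresses $B_{ij}(u)$ as a linear combination of ``flow imbalances'' at each complex. For every $z \in \C$, define
\[
  f_z(c) \;=\; \sum_{k:\,y_k = z}\kappa_k c^{z} \;-\; \sum_{k:\,y_k' = z}\kappa_k c^{y_k},
\]
so that the DR condition for a trajectory $c(t)$ is precisely $f_z(c(t)) = 0$ for every $z \in \C$ with $\|z\|_1 \ge 2$. Regrouping the sums in \eqref{pr_drift}--\eqref{pr_diff} by source complex and by product complex, together with the parallel identity $A_i(u) = -\sum_{z\in\C} z_i f_z(u)$ that arises from the analogous regrouping of the drift, I would derive
\[
  B_{ij}(u) \;=\; \sum_{z\in\C} z_i\bigl(\delta_{ij} - z_j\bigr)\, f_z(u).
\]
This is the workhorse of the proof and is where essentially all of the effort lies; the derivation is a bookkeeping exercise once one groups $\sum_k \kappa_k u^{y_k} y_{ki}' y_{kj}'$ by $y_k' = z$ and $\sum_k \kappa_k u^{y_k} y_{ki} y_{kj}$ by $y_k = z$ (in the second sum $u^{y_k} = u^z$, which is what produces the $c^z$ term of $f_z$).

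Next I would analyze the coefficient $z_i(\delta_{ij} - z_j)$ by cases on $\|z\|_1$. A direct check in the subcases $z = 0$ and $z = e_l$ shows that this coefficient vanishes for every $(i,j)$ whenever $\|z\|_1 \le 1$, so zero-order and first-order complexes never contribute to $B$, regardless of the value of $f_z$. Because the network is binary, the only remaining complexes are the quadratic ones, $z = 2e_l$ and $z = e_l + e_m$ with $l \ne m$. A short case analysis then gives that $z = 2e_l$ enters only the entry $B_{ll}$, with coefficient $-2$, while $z = e_l + e_m$ enters only the off-diagonal entries $B_{lm}$ and $B_{ml}$, each with coefficient $-1$. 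In particular, distinct quadratic complexes contribute to disjoint entries of $B$.

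From this decomposition both directions of the lemma are immediate. If the DR condition holds, then every quadratic $f_z(u(t))$ vanishes and the $\|z\|_1 \le 1$ coefficients already vanish identically, hence $B(u(t)) = 0$. Conversely, if $B(u(t)) = 0$, then reading off $B_{ll}(u(t)) = 0$ yields $f_{2e_l}(u(t)) = 0$ for each complex $2e_l \in \C$, and reading off $B_{lm}(u(t)) = 0$ yields $f_{e_l + e_m}(u(t)) = 0$ for each complex $e_l + e_m \in \C$; together these exhaust every $z \in \C$ with $\|z\|_1 \ge 2$ in the binary setting. The one genuine obstacle is the identity for $B_{ij}$ above; once it is in hand, the equivalence falls out of the linear independence of the coefficient patterns, which is exactly what binarity buys us.
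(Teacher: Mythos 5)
Your proposal is correct and follows essentially the same route as the paper: the identity $B_{ij}(u) = \sum_{z\in\C} z_i(\delta_{ij}-z_j)f_z(u)$ is exactly the paper's regrouping of $\sum_k \kappa_k u^{y_k}\bigl(f_{ij}(y_k')-f_{ij}(y_k)\bigr)$ by complex, with your coefficient $z_i(\delta_{ij}-z_j)$ equal to $-f_{ij}(z)$ in the paper's notation, and both arguments conclude by observing that in a binary network only $z=e_i+e_j$ contributes to the $(i,j)$ entry and does so with a nonzero coefficient. The only difference is cosmetic (sign conventions and your explicit case split between $2e_l$ and $e_l+e_m$).
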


Note that taken together, Theorem \ref{main_theorem} and Lemma \ref{lemma} show that Conjecture \ref{conjecture} stated in the previous section holds.

\begin{proof}[Proof of Lemma \ref{lemma}]
First note that if $\|z\|_1 \le 1$ for all $z \in \mathcal{C}$, then both conditions hold.  We may therefore consider the case where $\|z\|_1\le 2$ for each $z \in \mathcal{C}$ and $\|z\|_1=2$ for at least one complex $z \in \mathcal{C}$.

First, let us rewrite the expression in the parentheses of $B(u)$ in \eqref{pr_diff} as 
\begin{align*}
y_{ki}^\prime y_{kj}^\prime - y_{ki} y_{kj} - \delta_{ij} \zeta_{ki} = f_{ij}(y_k^\prime) - f_{ij}(y_k)\quad  \text{ where } \quad f_{ij} (y_k) = y_{ki} y_{kj} - \delta_{ij} y_{ki}.
\end{align*}
It is straightforward to show that for given indices $i$ and $j$, the expression $f_{ij}(y_k)$ is non-zero if and only if $y_k = e_i + e_j$, where $e_i$ denotes the vector with the $i^{th}$ entry equal to $1$ and zero otherwise. This means we can rewrite $B_{ij}(u)$ as 
\begin{align}
B_{ij}(u(t)) &= \sum_{k} \kappa_k u(t)^{y_k} (f_{ij}(y_k^\prime) - f_{ij}(y_k) )  \notag\\ 
& = \sum_{k:y_k^\prime = e_i+e_j} \kappa_k u(t)^{y_k} f_{ij}(e_i + e_j) -  \sum_{k:y_k = e_i+e_j} \kappa_k u(t)^{y_k} f_{ij}(e_i + e_j)\notag  \\
& = f_{ij } (e_i+e_j) \left(  \sum_{k: y_k^\prime = e_i+e_j} \kappa_k u(t)^{y_k} - \sum_{k: y_k = e_i+e_j} \kappa_k u(t)^{y_k} \right) .\label{eq:87966}
\end{align}
where the first sum is over those reactions with product complex $e_i + e_j$ and the second sum is over those reactions with source complex $e_i + e_j$. Since each $f_{ij}(e_i + e_j) > 0$, we see that $B(u(t)) = 0$ if and only if the term in parentheses in \eqref{eq:87966} is zero for each choice of $i$ and $j$.  The equivalence of the two conditions then follows. 
\end{proof}

The following proposition will be of use.
\begin{prop}\label{theorem_c}
Consider a stochastically modeled reaction network with  intensity functions given by stochastic mass action kinetics \eqref{eq:stochasticMA}.   Suppose there is a deterministic function $c(t)$, defined for $t \ge 0$, for which $P_\mu(x,t)$, the solution to the Kolmogorov forward equation \eqref{eq:CME}, satisfies \eqref{poissonAT}. Then, $E[X(t)] = c(t)$ is  the solution to the deterministic equation  \eqref{eq:mass-action} with $\tilde c = c(0)$.
\end{prop}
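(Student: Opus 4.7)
The plan is to show that $c(t) = E[X_t]$ satisfies the deterministic mass-action ODE by computing $\frac{d}{dt} E[X_i(t)]$ directly from the chemical master equation and then exploiting the product-Poisson form to evaluate the resulting expectations in closed form.

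First, because the mean of a Poisson with parameter $c_i(t)$ is $c_i(t)$, the assumed product-Poisson form for $P_\mu(\cdot, t)$ immediately gives $E[X_i(t)] = c_i(t)$ and, in particular, $c(0) = \tilde c$. Next I would differentiate the definition of expectation, interchange the derivative and the (infinite) sum over $x$, and substitute the CME \eqref{eq:CME}. Performing the change of variables $y = x - \zeta_k$ in the ``incoming'' term and cancelling telescopically against the ``outgoing'' term yields the standard identity
\begin{equation*}
\frac{d}{dt} E[X_i(t)] = \sum_k \zeta_{ki}\, E[\lambda_k(X_t)].
\end{equation*}
Equivalently, this is just the statement $\frac{d}{dt} E[f(X_t)] = E[(\mathcal{A}f)(X_t)]$ from \eqref{eq:generator} applied to $f(x) = x_i$.

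The key computation is then to evaluate $E[\lambda_k(X_t)]$ when $X_t$ has the assumed product-Poisson law. By stochastic mass action \eqref{eq:stochasticMA}, $\lambda_k(x) = \kappa_k \prod_{j} \frac{x_j!}{(x_j - y_{kj})!} \mathbf{1}_{\{x_j \ge y_{kj}\}}$. Using independence of the coordinates of $X_t$ together with the well-known identity that the $n$-th factorial moment of a Poisson$(\lambda)$ random variable equals $\lambda^n$, I get
\begin{equation*}
E[\lambda_k(X_t)] = \kappa_k \prod_{j=1}^d E\bigl[ X_{t,j}(X_{t,j}-1)\cdots (X_{t,j}-y_{kj}+1) \bigr] = \kappa_k \prod_{j=1}^d c_j(t)^{y_{kj}} = \kappa_k\, c(t)^{y_k}.
\end{equation*}
Combining with the previous display gives $\frac{d}{dt} c_i(t) = \sum_k \kappa_k c(t)^{y_k} \zeta_{ki}$ for each $i$, which is precisely \eqref{eq:mass-action}. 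Since $c(0) = \tilde c$, the claim follows.

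The only delicate step is the justification of differentiating $\sum_x x_i P_\mu(x,t)$ term by term and of interchanging summation orders when substituting the CME. This is the main technical point, but it is benign: the product-Poisson assumption ensures that $X_t$ has finite moments of all orders (uniformly on compact time intervals, as $c(t)$ is continuous), so the required dominated convergence and Fubini steps go through, and the process is non-explosive in the sense needed to pass between \eqref{eq:CME} and the generator formulation. Everything else is a routine reindexing and an appeal to the Poisson factorial-moment identity.
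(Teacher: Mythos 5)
Your proposal is correct and follows essentially the same route as the paper: the heart of both arguments is the evaluation $\EE[\lambda_k(X_t)] = \kappa_k\, c(t)^{y_k}$ via the Poisson factorial-moment identity, combined with the first-moment identity $\frac{d}{dt}\EE[X_i(t)] = \sum_k \zeta_{ki}\,\EE[\lambda_k(X_t)]$. The only difference is in how that first-moment identity is justified: you differentiate the master equation term by term and appeal to Fubini and dominated convergence, whereas the paper applies Dynkin's formula to the bounded truncations $f_m(x) = x_i \wedge m$ and lets $m \to \infty$, which packages the same interchange-of-limits argument slightly more cleanly; both justifications are sound here because the product-Poisson law has all moments finite.
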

\begin{proof}
The infinitesimal generator of the continous-time markov chain model is the operator $\mathcal{A}$ given by \eqref{eq:generator}.
Since the distribution of $X(t)$ is given by \eqref{poissonAT}, we know that $\EE[X_i(t)] = c_i(t)$. Moreover,
\begin{align}
\begin{split}
\EE[&\lambda_k(X(s))] = \kappa_k \EE\left[ \frac{X(s)!}{(X(s) - y_k)!}\right] \\
 &= \kappa_k\EE \left[ \prod_{i=1}^d \frac{X_i(s)!}{(X_i(s) - y_{ki})!} \right] = \kappa_k\sum_{x\in \Z_{\ge 0}^{d}}   \prod_{i=1}^d  \frac{x_i!}{(x_i - y_{ki})!}  \prod_{i=1}^d  e^{-c_i(s)}\frac{c_i(s)^{x_i}}{x_i ! } \\
& =\kappa_k \sum_{x\in \Z_{\ge 0}^{d}}  \prod_{i=1}^d  e^{-c_i(s)}\frac{c_i(s)^{x_i}}{(x_i - y_{ki})!}  = \kappa_kc(s)^{y_k} \sum_{x\in \Z_{\ge 0}^{d}}  \prod_{i=1}^d  e^{-c_i(s)}\frac{c_i(s)^{x_i-y_{ki}}}{(x_i - y_{ki})!}  = \kappa_kc(s)^{y_k},
\end{split}
\label{eq:7686}
\end{align}
where the final equality holds since we are summing a probability mass function over all of $\Z^d_{\ge 0}$.
For $m >0$,  applying Dynkin's formula with the function $f_m(x) = x_i \wedge m \equiv \min\{x_i,m\}$ yields
\begin{align*}
\EE[ X_i(t)\wedge m] &= \EE[X(0)\wedge m] + \EE\left[ \int_0^t  \mathcal{A} f_m(X(s)) ds\right] \\
& = \EE[X_i(0)\wedge m] + \int_0^t \EE\left[\sum_{k=1}^K \lambda_k(X(s) ) ((X_i(s) +\zeta_{ki}) \wedge m - X_i(s)\wedge m) \right] ds. 
\end{align*}
 Noting that $\sup_{x\in \Z^d_{\ge 0}}  |(x_i + \zeta_{ki})\wedge m - x_i\wedge m| \le \max_\ell \|\zeta_{\ell}\|_{\infty}$ for all $i$, we may let $m\to \infty$ and apply the Dominated convergence theorem to conclude
\begin{align}\label{dynkinnew}
\EE[ X(t)] & = \EE[X(0)] + \int_0^t \EE\left[\sum_{k=1}^K \lambda_k(X(s) ) \zeta_k \right] ds. 
\end{align}
Combining  \eqref{dynkinnew} with  \eqref{eq:7686}, together with the fact that $c(t) = E[X(t)]$, yields
\begin{align*}
c(t) &= \tilde c + \int_0^t \sum_{k=1}^K \kappa_k c(s)^{y_k} \zeta_k ds. 
\end{align*}
Differentiating both sides shows that $c(t)$ is the solution to \eqref{eq:mass-action}.
\end{proof}

We now turn to the proof of Theorem \ref{main_theorem}.  We begin by stating two technical lemmas whose proofs are relegated to  Appendix \ref{appendix:B}.

\begin{lemma}\label{Calculation}
Suppose $P_\mu(x,t)$ is given by \eqref{poissonAT} with $c(t) \in \R_{>0}^d$ for all $t \ge 0$.   Then $P_\mu(x,t)$ is the solution to the Kolmogorov forward equation  \eqref{eq:CME} if and only if $c(t)$ satisfies the deterministic equation \eqref{eq:mass-action} and 
\begin{align}\label{eq:eq}
 \sum_{k} \kappa_k c(t)^{y_k}  \left[  g_{x,c(t)}(y_k^\prime) -g_{x,c(t)}(y_k)  \right] = 0 
\end{align}
where for each $x \in \Z^d_{\ge 0}$ and $c \in \R^d_{ > 0}$,
\begin{align}\label{eq:ComplicatedFunction}
g_{x,c}(y_k) =  \sum_{j=1}^d \left( \frac{x_j}{c_j}-1\right) y_{kj} - \frac{x !}{(x -y_{k})!} c^{-y_k} +1.
\end{align}
Moreover, if $\|y_k\|_1 \leq 1$, then $g_{x,c} (y_k) = 0$. 
\end{lemma}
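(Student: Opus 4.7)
The plan is to substitute the product-form Poisson ansatz into the chemical master equation \eqref{eq:CME} and reorganize the resulting identity so that it splits cleanly into an \emph{ODE piece} (containing $\dot c_j - \sum_k \kappa_k c^{y_k}\zeta_{kj}$) plus the sum in \eqref{eq:eq}. Once this split is in hand, both directions of the ``if and only if'' will fall out immediately.

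First I would compute the time derivative: since $\log P_\mu(x,t) = \sum_j [-c_j(t) + x_j \log c_j(t) - \log(x_j!)]$, the chain rule gives $\frac{d}{dt} P_\mu(x,t) = P_\mu(x,t) \sum_j \dot c_j(t)(x_j/c_j(t) - 1)$. Next I would simplify the creation term on the right-hand side of \eqref{eq:CME}. Using that $x - \zeta_k - y_k = x - y_k'$ and cancelling factorials between $\lambda_k(x-\zeta_k)$ and $P_\mu(x - \zeta_k, t)$ yields
\[
\lambda_k(x - \zeta_k)\, P_\mu(x - \zeta_k, t)\, \mathbf{1}_{\{x - \zeta_k \ge 0\}} \;=\; \kappa_k\, c(t)^{y_k - y_k'}\, \frac{x!}{(x - y_k')!}\, P_\mu(x,t),
\]
with the convention $\frac{x!}{(x-v)!} = \prod_i \frac{x_i!}{(x_i - v_i)!}\mathbf{1}_{\{x_i \ge v_i\}}$. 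Dividing the CME through by $P_\mu(x,t) > 0$ (which is positive since $c(t) \in \R^d_{>0}$) reduces the CME to the pointwise identity
\[
\sum_j (x_j/c_j - 1)\, \dot c_j \;=\; \sum_k \kappa_k\, c^{y_k}\Big[c^{-y_k'}\tfrac{x!}{(x-y_k')!} - c^{-y_k}\tfrac{x!}{(x-y_k)!}\Big],
\]
which must hold for every $x \in \Z^d_{\ge 0}$.

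The key algebraic step is to unpack $g_{x,c}(y_k') - g_{x,c}(y_k)$ directly from \eqref{eq:ComplicatedFunction}, which gives
\[
g_{x,c}(y_k') - g_{x,c}(y_k) \;=\; \sum_j (x_j/c_j - 1)\,\zeta_{kj} \;-\; \Big[c^{-y_k'}\tfrac{x!}{(x-y_k')!} - c^{-y_k}\tfrac{x!}{(x-y_k)!}\Big].
\]
Multiplying by $\kappa_k c^{y_k}$ and summing over $k$ lets me rewrite the previous display equivalently as
\[
\sum_j (x_j/c_j - 1)\Big[\dot c_j - \sum_k \kappa_k c^{y_k}\zeta_{kj}\Big] \;+\; \sum_k \kappa_k c^{y_k}\big[g_{x,c}(y_k') - g_{x,c}(y_k)\big] \;=\; 0.
\]
This decomposition makes the lemma transparent. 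For the backward direction, the ODE \eqref{eq:mass-action} kills the first sum and \eqref{eq:eq} kills the second, so the display holds for every $x$ and $P_\mu$ solves the CME. For the forward direction, if $P_\mu$ solves the CME then Proposition \ref{theorem_c} already implies that $c(t)$ solves \eqref{eq:mass-action}, and the decomposition then forces the second sum to vanish for every $x$, which is precisely \eqref{eq:eq}.

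The final claim that $g_{x,c}(y_k) = 0$ whenever $\|y_k\|_1 \le 1$ is a two-case check: if $y_k = 0$ the linear sum vanishes and $\frac{x!}{(x-y_k)!}c^{-y_k}=1$, cancelling the $+1$; if $y_k = e_i$ then $\sum_j(x_j/c_j-1)y_{kj} = x_i/c_i - 1$ while $\frac{x!}{(x-e_i)!}c^{-e_i} = x_i/c_i$ (both equal $0$ when $x_i = 0$ under the indicator convention), and again everything cancels. The main obstacle in the whole argument is purely notational bookkeeping, namely the factorial and indicator manipulations in simplifying $\lambda_k(x-\zeta_k)P_\mu(x-\zeta_k,t)$ and in verifying the identity for $g_{x,c}(y_k')-g_{x,c}(y_k)$; everything else is forced by these computations.
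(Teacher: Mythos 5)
Your proposal is correct and follows essentially the same route as the paper: substitute the Poisson ansatz into both sides of the chemical master equation, cancel factorials in the creation term, divide by $P_\mu(x,t)>0$, and identify the resulting pointwise identity with \eqref{eq:eq} after extracting the ODE \eqref{eq:mass-action} (obtained in the forward direction from Proposition \ref{theorem_c}, exactly as the paper does). Your explicit two-term decomposition into an ODE piece plus the $g$-sum is only a slightly cleaner bookkeeping of the paper's ``equate and then reverse the steps'' argument, and your two-case verification that $g_{x,c}(y_k)=0$ for $\|y_k\|_1\le 1$ matches the paper's.
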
 

\begin{lemma}\label{Function_Independent}
Let $\{z_1,z_2, ...., z_m\} \subset \C$ be the collection of complexes that are at least binary (i.e.~$\|z_i\|_1 \ge 2$). Fix a value $c \in \R^d_{>0}$.  For each $i \in \{1,\dots, m\}$ let $f_i: \Z_{\ge 0}^d \rightarrow \R$ be defined as
\[
	f_i(x) = g_{x,c}(z_i).
\]
Then $\{ f_i\}_{i=1}^m$ are linearly independent \edited{as functions of x}. 
\end{lemma}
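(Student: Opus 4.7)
The plan is to exploit the fact that each $f_i(x)$ is a polynomial in $x=(x_1,\dots,x_d)$ whose top-degree behavior is completely determined by $z_i$. The key identification is that under the standard interpretation of the falling factorial,
\[
\frac{x!}{(x-z_i)!} \;=\; \prod_{j=1}^d x_j(x_j-1)\cdots(x_j - z_{ij}+1),
\]
which is a polynomial of total degree $\|z_i\|_1$ with leading monomial $x^{z_i}:=\prod_j x_j^{z_{ij}}$. Substituting into \eqref{eq:ComplicatedFunction}, I would decompose
\[
f_i(x) \;=\; -\,c^{-z_i}\prod_{j=1}^d x_j(x_j-1)\cdots(x_j - z_{ij}+1) \;+\; L_i(x),
\]
where $L_i(x) = \sum_{j=1}^d (x_j/c_j - 1)\,z_{ij} + 1$ collects the remaining terms and is affine in $x$ (degree at most one).

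Next, suppose $\sum_{i=1}^m \alpha_i f_i(x)=0$ for every $x\in\Z_{\ge 0}^d$. Since each $f_i$ is a polynomial and $\Z_{\ge 0}^d$ contains a full grid in each coordinate direction, a standard one-variable-at-a-time argument upgrades this to the polynomial identity $\sum_i \alpha_i f_i \equiv 0$ on $\R^d$. I would then extract coefficients by total degree. Letting $M:=\max_i \|z_i\|_1 \ge 2$, the homogeneous component of degree $M$ of $\sum_i \alpha_i f_i$ is
\[
-\sum_{i:\,\|z_i\|_1 = M} \alpha_i\, c^{-z_i}\, x^{z_i},
\]
because each $L_i$ contributes nothing beyond degree one and $M\ge 2$. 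Since the multi-indices $\{z_i:\|z_i\|_1=M\}$ are distinct, the monomials $\{x^{z_i}\}$ are distinct and therefore linearly independent over $\R$; combined with $c^{-z_i}>0$, this forces $\alpha_i=0$ for every $i$ with $\|z_i\|_1=M$.

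I would then finish by downward induction on $M$: remove those $f_i$ already shown to carry zero coefficient and repeat the leading-monomial argument at the next largest value of $\|z_i\|_1$ that appears, continuing down to the base value $2$. The affine remainders $L_i$ never interfere at any step because they live in degrees $\le 1 < 2 \le \|z_i\|_1$. I do not anticipate a substantive obstacle; the only care needed is in verifying the leading-monomial form of the falling factorial and in checking that $L_i$ truly contributes nothing in degrees $\ge 2$, both of which are routine.
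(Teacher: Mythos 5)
Your proof is correct and follows essentially the same route as the paper's: both identify the leading monomial of $f_i$ as $-c^{-z_i}x^{z_i}$ of total degree $\|z_i\|_1\ge 2$, kill the coefficients attached to the complexes of maximal $1$-norm by comparing top-degree homogeneous components, and then iterate downward through the remaining norms. The only difference is that you spell out the polynomial-identity upgrade from $\Z_{\ge 0}^d$ to $\R^d$ and the harmlessness of the affine remainder, which the paper leaves implicit.
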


We now prove Theorem \ref{main_theorem}.

\begin{proof}[Proof of Theorem \ref{main_theorem}]

First note that the implication \textit{(iii) $\implies$ (ii)} is trivial.  We will now show that \textit{(ii) $\implies$ (i)} and that \textit{(i) $\implies$ (iii)}.

\vspace{.1in}

\noindent Proof that  \textit{(ii) $\implies$ (i)}. 

By proposition \ref{theorem_c}, $E[X(t)] = c(t)$ solves the  deterministic equation  \eqref{eq:mass-action} with $\tilde c = c(0) \in \R^d_{>0}$.  
Therefore, we just need to show that $c(t)$ will satisfy the DR condition of Definition \ref{def:DR} .  Since there is always a positive probability that no reaction takes place by time $t>0$, we know that $E[X_i(t)] = c_i(t) > 0$.   Hence, because $P_\mu(x,t)$ defined in \eqref{poissonAT} is the solution to the chemical master equation \eqref{eq:CME}, Lemma \ref{Calculation} allows us to conclude that  \eqref{eq:eq}  holds with $g_{x,c}(y)$ defined as in \eqref{eq:ComplicatedFunction}. Since $g_{x,c(t)}(z) = 0$ if  $\|z\|_1 \le 1$, we can rewrite  \eqref{eq:eq} as a summation over complexes which are at least binary: 
\begin{align*}
\sum_{z : \|z\|_1\ge 2}  g_{x,c(t)} (z) \left[  \sum_{k: y_k^\prime = z} \kappa_k c(t)^{y_k} - \sum_{k: y_k = z} \kappa_k c(t)^{y_k} \right] = 0. 
\end{align*}
Because the above holds for all $x\in \Z^d_{\ge 0}$, Lemma \ref{Function_Independent} allows us to conclude that each term in brackets is identically equal to zero:
$$\sum_{k: y_k^\prime = z} \kappa_k c(t)^{y_k} =  \sum_{k: y_k = z} \kappa_k c(t)^{y_k}, $$
which is exactly the the DR condition of Definition \ref{def:DR}.

\vspace{.1in}

\noindent Proof that  \textit{(i) $\implies$ (iii)}. 

Suppose that for $c(t)$ satisfying the ODE \eqref{eq:mass-action}  we have 
\begin{align*}
 \sum_{k: y_k^\prime = z } \kappa_k c(t)^{y_k}  =  \sum_{k: y_k = z} \kappa_k c(t)^{y_k},
\end{align*}
for those $z$ with $\|z\|_1 \ge 2$.
Then for any $x\in \Z^{d}_{\ge 0}$ we may multiply the above by the functions $g_{x,c(t)}(z)$ defined in \eqref{eq:ComplicatedFunction} and conclude 
\begin{align*}
g_{x,c(t)}(z)  \sum_{k: y_k^\prime = z } \kappa_k c(t)^{y_k}  = g_{x,c(t)}(z) \sum_{k: y_k = z} \kappa_k c(t)^{y_k}.
\end{align*}
Note that the previous step is valid since $c(t)  \in \R^d_{> 0}$ by Lemma \ref{newlemma}.
We now sum over all complexes $z$ (not just those with $\|z\|_1 \ge 2$), while noting that $g_{x,c(t)}(z) = 0$ if $\|z\|_1 \le 1$, to see
\begin{align*}
 0 & = \sum_{z} g_{x,c(t)}(z)  \left( \sum_{k:y_k^\prime = z } \kappa_k c(t)^{y_k}  - \sum_{k:y_k = z} \kappa_k c(t)^{y_k}  \right) \\
 & = \sum_{k=1}^K \kappa_k c(t)^{y_k}    \left( g_{x,c(t)}(y_k^\prime) -g_{x,c(t)}(y_k)  \right).
\end{align*}
which, by Lemma \ref{Calculation}, implies $P_\mu(x,t)$ in \eqref{poissonAT} is the solution to the chemical master equation. Uniqueness of the solution to the chemical master equation follows from Lemma 1.23 in \cite{anderson2015stochastic}.
\end{proof}

\section{Examples}
\label{sec:pr_examples}

We provide a number of examples to demonstrate our theory.
We first provide two non-first order examples that satisfy the DR condition, and hence admit a time dependent distribution that is a product of Poissons.   These examples will  make it clear that satisfying the DR condition is difficult in that the parameters and initial conditions of the model must be chosen precisely.   \edited{Example \ref{ex:linearNoDR} is then provided to demonstrate that even when a model admits an effectively linear deterministic system, the associated stochastic system still may not satisfy the DR condition.}  Next, we provide two examples, Examples \ref{ex_n_2} and \ref{xplusy}, which demonstrate   that in the time-dependent case there exist networks for which \textit{no choice of rate constants} will yield a model that satisfies the DR condition (except in the trivial case--see Remark \ref{remark:trivialprob}--when the initial condition is equal to a complex balanced equilibrium). \edited{Finally, Example \ref{example5} is included to facilitate the understanding of the proof of Lemma \ref{newlemma} and Example \ref{example6} shows that the DR condition does not imply weak reversibility of any portion of the network, which is different from the classical theory of complex balanced models. }

%
%
%
%
%
%
%

\begin{example}
Consider the reaction network in Example \ref{ex_t},
\begin{align*}
   2X \xrightleftharpoons[\quad \kappa_2 \quad ]{\kappa_1} 2Y, \quad
   \emptyset  \xrightleftharpoons[\quad \kappa_4 \quad ]{\kappa_3} X, \quad
     \emptyset  \xrightleftharpoons[\quad \kappa_6 \quad ]{\kappa_5} Y,
\end{align*}
where the rate constants are placed next to their respective reaction arrow. Now, by Example \ref{ex_t} and Theorem \ref{main_theorem}, if the rate constants and the initial condition satisfy \eqref{eq:DRDRDR}, then  for any $z \in \Z^2_{\ge 0}$ and $t \ge 0$,
\begin{align*}
	P_\mu(z,t) &= e^{-(x(t) + y(t))} \frac{x(t)^{z_1}}{z_1!} \frac{y(t)^{z_2}}{z_2!}.
\end{align*}

 A few remarks are in order.  First, note that for this example the diffusion matrix $B$ from \eqref{pr_diff} is
 \begin{align*}
 B(u) = 
    \begin{pmatrix}
      -2 \kappa_1 u^2 + 2 \kappa_2 u_2^2  &  0 \\
       0 & 2 \kappa_1 u_1^2 - 2 \kappa_2 u_2^2  \\
    \end{pmatrix},
 \end{align*}
which also yields the  equation  \eqref{DRcond1} when we set $B((x(t),y(t))) = 0$. 

Second, this model will admit a complex balanced equilibrium if and only if
\[
	\frac{\sqrt{\kappa_1}}{\sqrt{\kappa_2}} = \frac{\kappa_4}{\kappa_6}\cdot \frac{\kappa_5}{\kappa_3},
\]
which is a \textit{less} restrictive condition on the parameters of the model than \eqref{eq:DRDRDR}.  Said differently, there are choices of rate constants (for example when $\kappa_4 \ne \kappa_6$) for which the underlying model is complex balanced, but for which the DR condition does not hold.
  \hfill $\triangle$
\end{example}

For some choices of rate constants, the previous model admitted a positive complex balanced equilibrium.    The next example shows that 
a time dependent distribution that is a product of Poissons may still exist even if the associated deterministic model admits no positive equilibria for any choice of rate constants.

\begin{example}\label{ex_nont}
\edited{Consider the \textit{decaying-dimerization reaction set} which was introduced in \cite{gillespie2001approximate},}
\begin{align*}
   X \xrightleftharpoons[\quad \kappa_2 \quad ]{\kappa_1 } 2Y, \quad
   X  \xrightarrow{\quad \kappa_3  \quad } Z,\quad
   Y  \xrightarrow{\quad \kappa_4  \quad } \emptyset. 
\end{align*}
\edited{Note that,  because of the reaction $Y\to \emptyset$, as $t \to \infty$ the deterministic and stochastic models will both converge to the boundary of $\R^d_{\ge 0}$ with $x  = y = 0$.}

The DR condition of Definition \ref{def:DR} is
\begin{equation}\label{DRcond2}
\kappa_1 x(t)  = \kappa_2 y(t)^2
\end{equation}
where $x(t)$ and $y(t)$ are the solutions to the associated deterministic model \eqref{eq:mass-action}.  We search for solutions that satisfy the DR condition by plugging \eqref{DRcond2} into the deterministic model \eqref{eq:mass-action}
\edited{
\begin{equation}
\begin{split}\label{ex2_deq}
\frac{dx}{dt} &= - \kappa_1x+ \kappa_2 y^2 - \kappa_3 x= -\kappa_3 x \qquad \qquad   \hspace{0.1in} x(0) = x_0 \\
\frac{dy}{dt} &=  \hspace{.1in}2 \kappa_1 x - 2\kappa_2 y^2 - \kappa_4 y = -\kappa_4 y \qquad \qquad y(0) = y_0 \\
\frac{dz}{dt} &=  \hspace{.1in} \kappa_3 x  \hspace{1.6in}\qquad \qquad z(0) = z_0.
\end{split}\end{equation}}
As in the previous example, the system of equations \eqref{ex2_deq} can be solved exactly yielding a solution of 
\edited{
\begin{equation}\label{ex2_sol}
x(t) = x_0 e^{- \kappa_3 t} \qquad \qquad \qquad y(t) = y_0 e^{-\kappa_4 t} \qquad \qquad \qquad z(t) = z_0 + x_0 (1-e^{-\kappa_3 t}).
\end{equation}}
Requiring that \eqref{DRcond2} holds enforces the following conditions
\begin{equation}\label{eq:;lkdajf;lkjd;fja}
	\kappa_3 = 2\kappa_4\quad \text{and} \quad \frac{\kappa_1}{\kappa_2} = \frac{y_0^2}{x_0}.
\end{equation}
Hence, any model satisfying the conditions \eqref{eq:;lkdajf;lkjd;fja} will yield a distribution satisfying \eqref{7697679}.  

For example, suppose we have
\edited{
\begin{equation}\label{eq:rateconstant}
	\kappa_1 = 9, \quad \kappa_2 = 1,\quad  \kappa_3 = 2, \quad \kappa_4 = 1, \quad x_0 = 900, \quad \text{and}\quad y_0 = 90 \quad \text{and}\quad z_0 = 100.
\end{equation}}
Then the solution to \eqref{eq:mass-action} is
\edited{
\begin{equation}\label{eq:sdfsbdf}
	x(t) = e^{-2t}, \quad y(t) = 3e^{-t},\quad z(t) = 2 - e^{-2t}
\end{equation}}
which can be readily checked to satisfy the DR condition \eqref{DRcond2}.

Hence, by Theorem \ref{main_theorem} we have that for any \edited{$w \in \Z^3_{\ge 0}$} and $t \ge 0$,
\edited{
\begin{align*}
	P_\mu(w,t) &= e^{-(x(t) + y(t)+z(t))} \frac{x(t)^{w_1}}{w_1!} \cdot \frac{y(t)^{w_2}}{w_2!} \cdot \frac{z(t)^{w_3}}{w_3!}.
\end{align*}}
Note that even though \edited{$2X(t) + Y(t)+Z(t)$ only decreases along the trajectory, i.e. that $2X(t) + Y(t)+Z(t) \leq 2x_0 + y_0+z_0$ for all $t \ge 0$, the relevant state space is still all of \edited{$\Z^3_{\ge 0}$} as our initial distribution is the product of Poissons
}
\edited{
\[
	\mu(w) = e^{-(x_0+ y_0+z_0)} \frac{x_0^{w_1}}{w_1!}\cdot  \frac{y_0^{w_2}}{w_2!}\cdot  \frac{z_0^{w_3}}{w_3!},
\]}
 which has support on all of \edited{$\Z^3_{\ge 0}$}. \edited{We performed numerical experiments on this model and present their results in Figure \ref{fig:senmod4}. }
\begin{figure}
\begin{center}
\includegraphics[width = 3in]{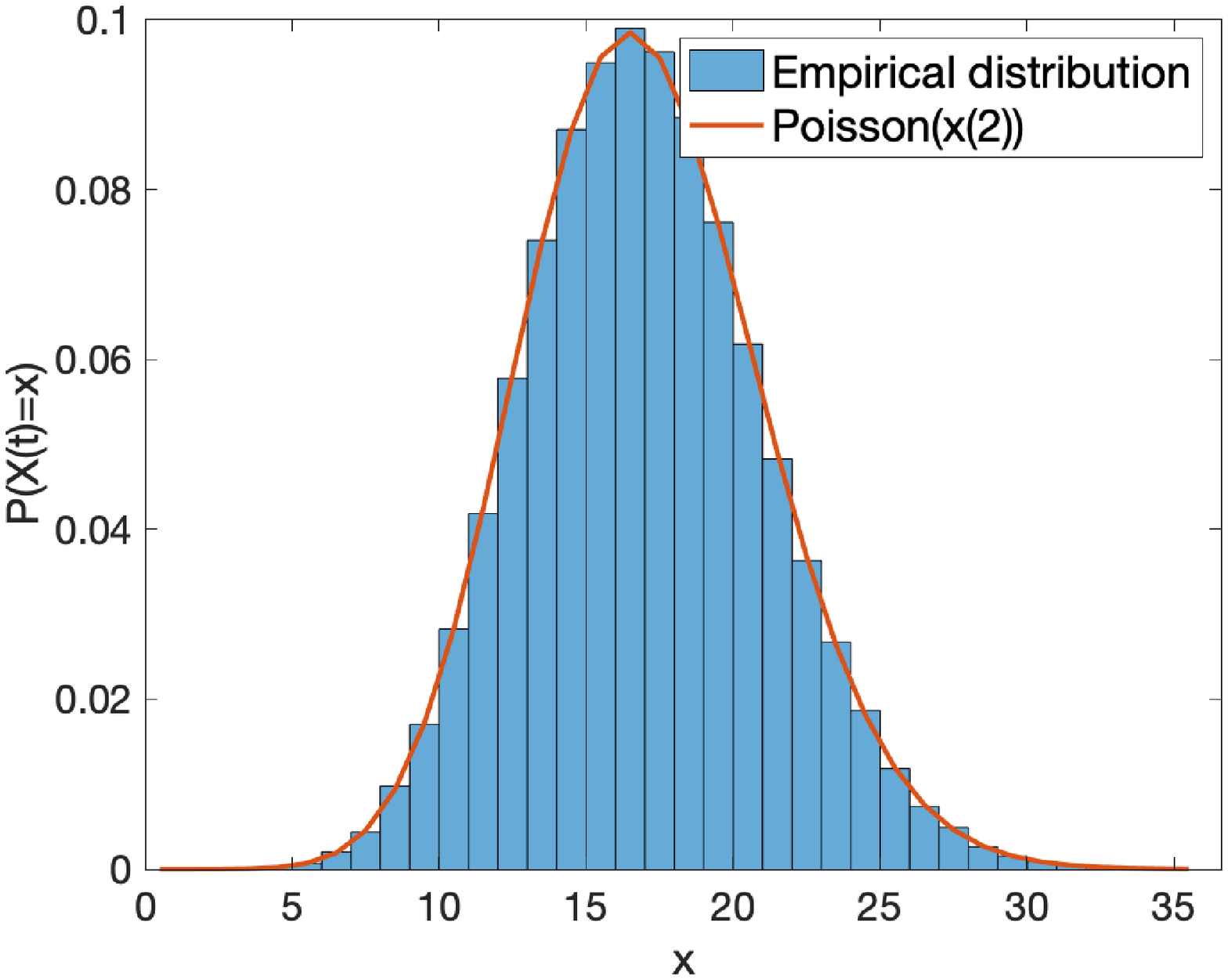} \includegraphics[width = 3in]{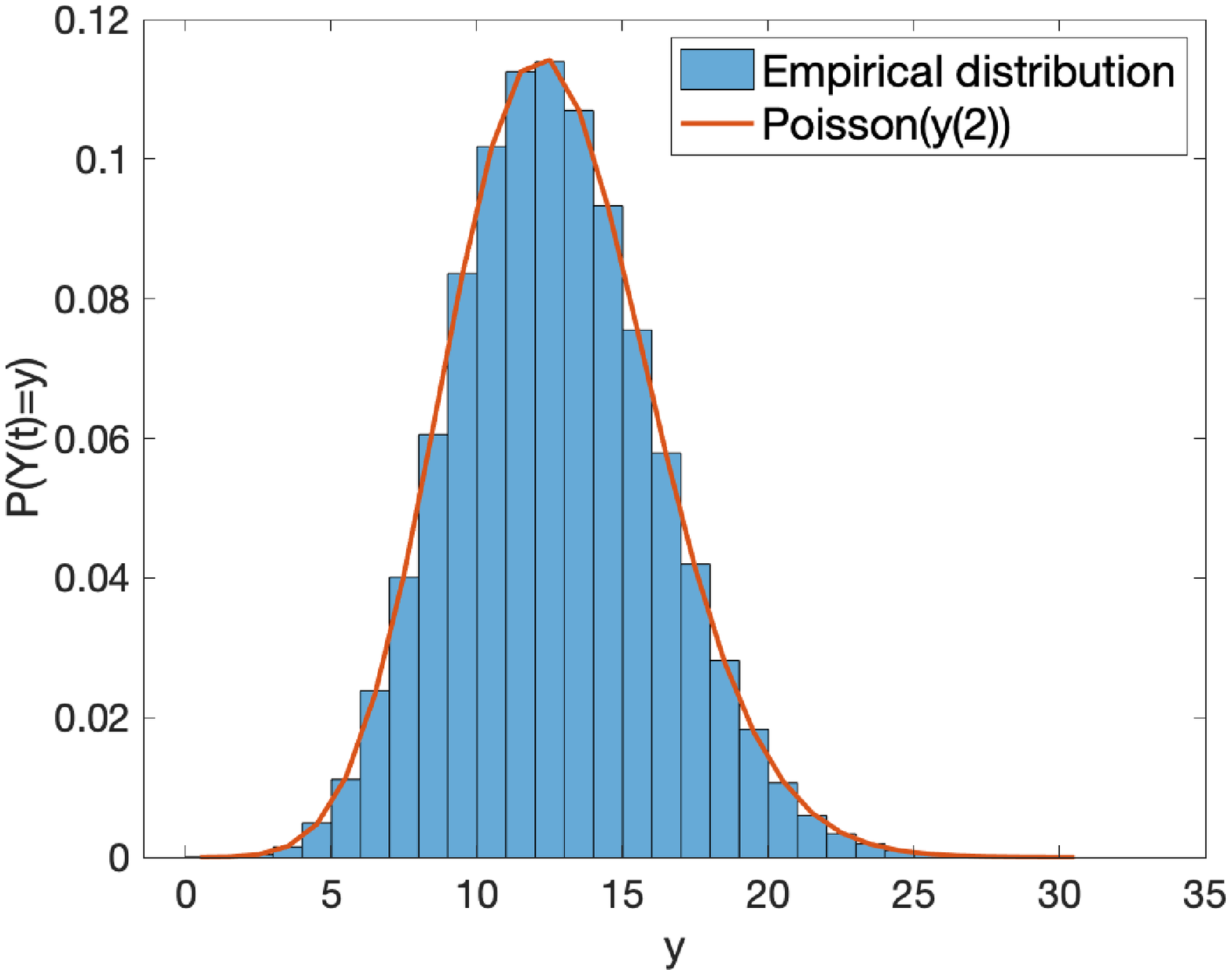}
\end{center}
\caption{Comparison of an empirical probability mass function and exact probability distribution of species $X$ and $Y$ at time $T = 2$ for Example \ref{ex_nont}, where the parameters of the model are given in \eqref{eq:rateconstant}.  The empirical probability mass functions were obtained using Monte Carlo  with $N = 10^6$ trajectories and is plotted via a histogram. The exact probability distribution is provided by Theorem \ref{main_theorem}, with $c(t)$ given by \eqref{eq:sdfsbdf}.}
\label{fig:senmod4}
\end{figure}
\hfill $\triangle$
\end{example}

\begin{example}\label{ex:linearNoDR}
\edited{Consider the network with the following network diagram,
\begin{align*}
   X \xrightleftharpoons[\quad \kappa_2 \quad ]{\kappa_1 } 2Y, \quad
   X  \xrightarrow{\quad \kappa_3  \quad } Z,\quad
   Y  \xrightarrow{\quad 4\kappa_4  \quad } \emptyset, \quad 
   Y  \xrightarrow{\quad \kappa_4  \quad } 4Y. 
\end{align*}
Note that the DR condition can not be satisfied for complex $4Y$ since it is not a source complex for any reaction. However, this model was specifically chosen so that the dynamics of the associated deterministic system are the same as \eqref{ex2_deq} in Example \ref{ex_nont}:  
\begin{equation}
\begin{split}\label{exadd_deq}
\frac{dx}{dt} &= - \kappa_1x+ \kappa_2 y^2 - \kappa_3 x  \qquad \qquad   \hspace{1.2in} x(0) = x_0 \\
\frac{dy}{dt} & =  2 \kappa_1 x - 2\kappa_2 y^2 - \kappa_4y \hspace{1.18in} \qquad \qquad y(0) = y_0 \\
\frac{dz}{dt} &=   \kappa_3 x  \hspace{2.25in}\qquad \qquad z(0) = z_0.
\end{split}\end{equation}
Hence, if parameters are chosen satisfying \eqref{eq:;lkdajf;lkjd;fja}, the solution to \eqref{exadd_deq} is given by \eqref{ex2_sol} and the dynamics are effectively linear. }

\edited{
However, even though the dynamics are effectively linear,  the DR condition does not hold and Theorem \ref{main_theorem} tells us that the time evolution of the master equation can not be solved as a time-dependent product-form Poisson distribution. We verified this numerically by performing  simulations on the model with parameters given via \eqref{eq:rateconstant}.  The results are presented in Figure \ref{fig:senmod5}. The resulting distributions are clearly non-Poissonian.  Moreover, the empirical mean and variance of $X$ at time $2$ are  given by 
\begin{equation*}
\EE[X(2)] \approx 16.63 \neq  57.78 \approx \text{Var}(X(2)).
\end{equation*}
In conclusion, we see that even effectively linear dynamics does not guarantee a time dependent Product-form Poisson distribution.
\begin{figure}
\begin{center}
\includegraphics[width = 3in]{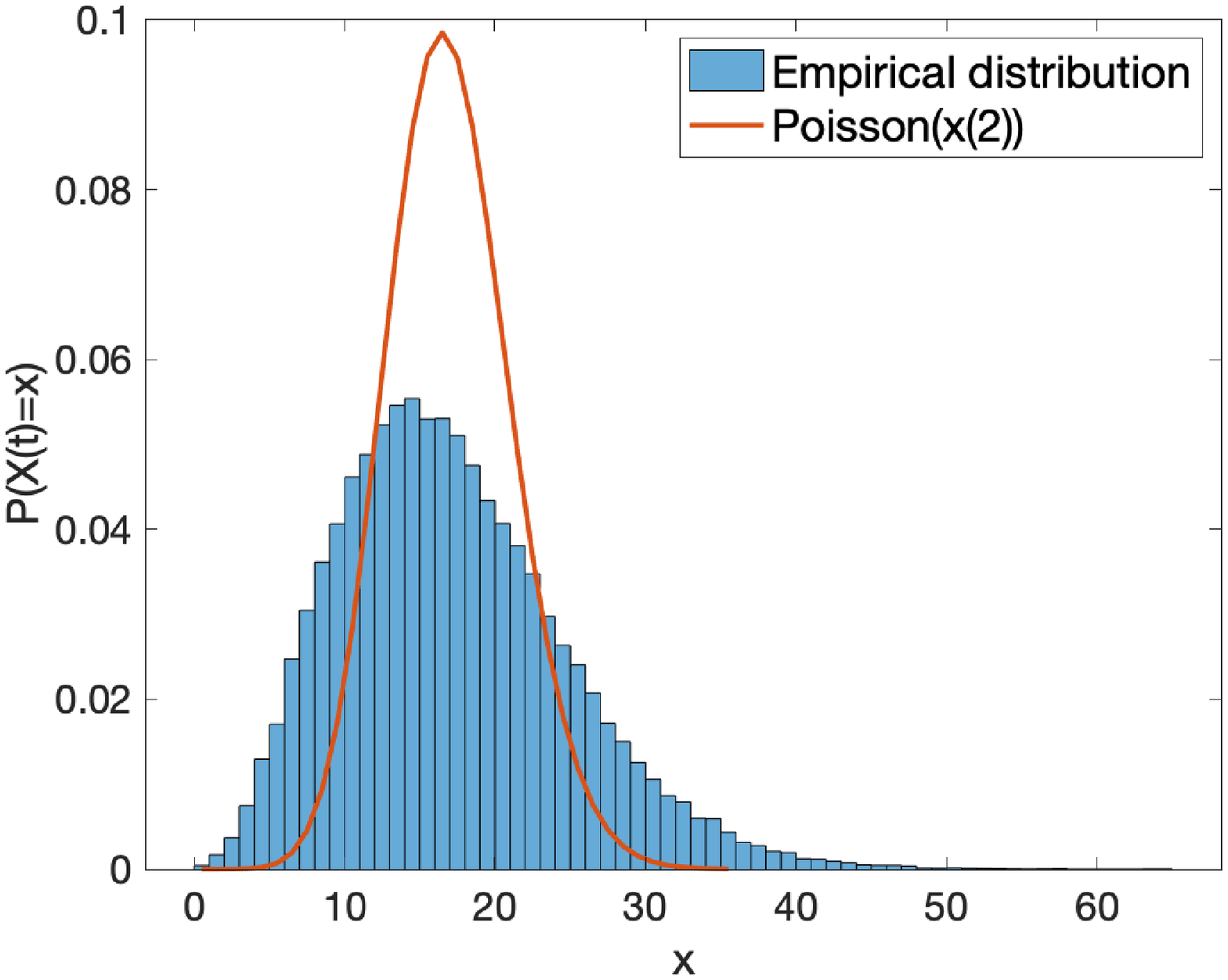} \includegraphics[width = 3in]{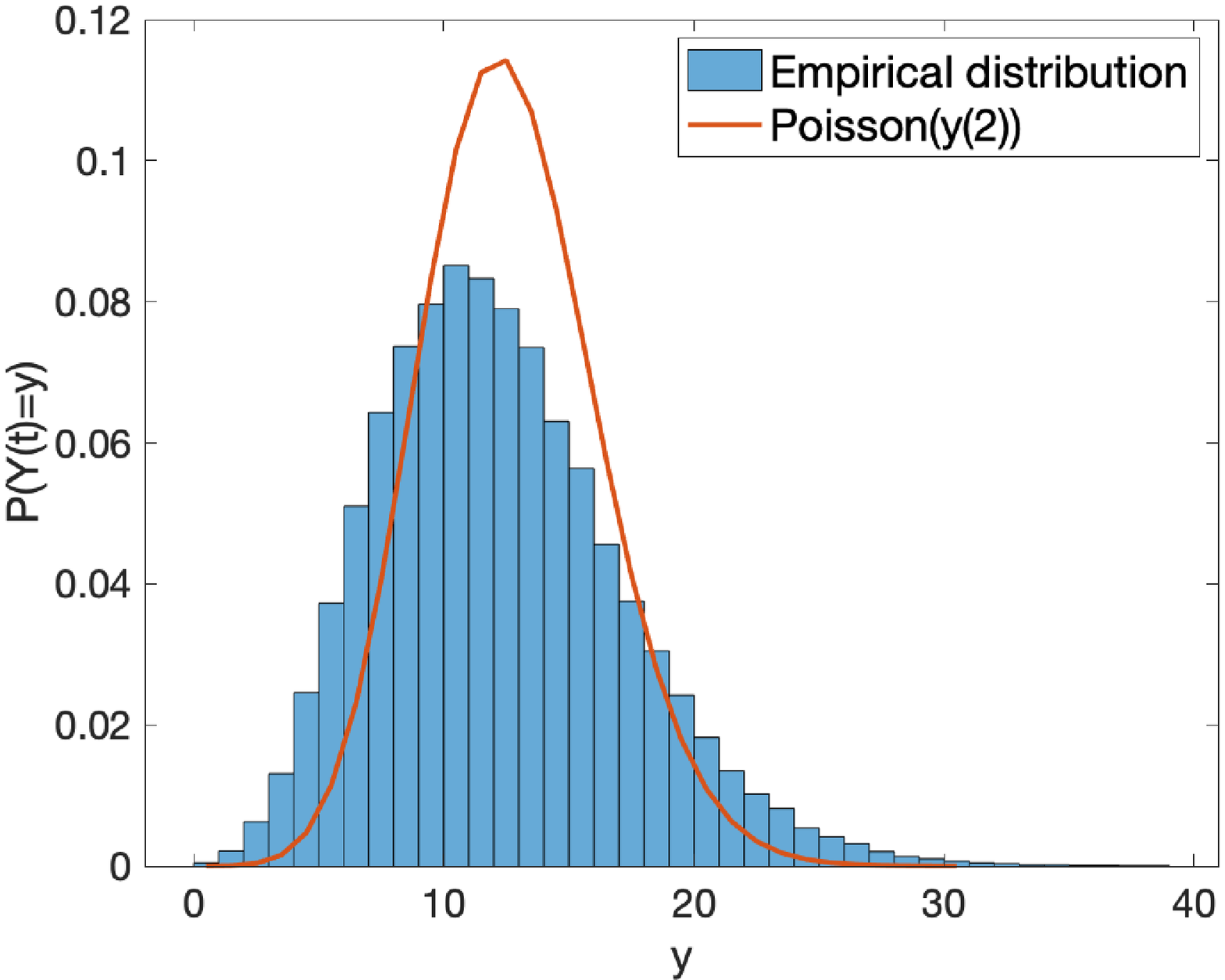}
\end{center}
\caption{Comparison of an empirical probability mass function and  Poisson distribution  of species $X$ and $Y$ at time $T = 2$ for Example \ref{ex:linearNoDR}, where the parameters of the model are given in \eqref{eq:rateconstant}.  The empirical probability mass functions were obtained using Monte Carlo  with $N = 10^6$ trajectories and is plotted via a histogram. The  Poisson distributions were chosen to have the same means as in Example  \ref{ex_nont} since the ODE models for the two examples are the same.
}
\label{fig:senmod5}
\end{figure}}
\hfill $\triangle$
\end{example}

For any weakly reversible model, there exists a choice of rate constants that make the resulting model complex balanced \cite{horn1972necessary}. The next two examples demonstrate that there are weakly reversible networks for which no nontrivial (in the sense of Remark \ref{remark:trivial}) solution to the forward equation is a product of Poissons, regardless of the choice of rate constants.

\begin{example}\label{ex_n_2}
Consider the network in Example \ref{ex_n},
\begin{equation*}\begin{split}
   X  \xrightleftharpoons[\quad \kappa_2 \quad ]{\kappa_1} 2Y, \quad
   \emptyset  \xrightleftharpoons[\quad \kappa_4 \quad ]{\kappa_3} X, \quad
    \emptyset  \xrightleftharpoons[\quad \kappa_6 \quad ]{\kappa_5} Y,
\end{split}
\end{equation*}
where the rate constants have been placed next to their respective reactions. By Example \ref{ex_n}, we may conclude that no nonconstant solution exists and, by Theorem \ref{main_theorem}, there is no choice of parameters which yields a distribution that is a product of Poissons for all time.\hfill $\triangle$
\end{example}

\begin{example}\label{xplusy}
Consider the network 
\begin{align*}
   \emptyset \xrightleftharpoons[\quad \kappa_2 \quad ]{\kappa_1} X+Y, \quad
   \emptyset  \xrightleftharpoons[\quad \kappa_4 \quad ]{\kappa_3} X, \quad
     \emptyset  \xrightleftharpoons[\quad \kappa_6 \quad ]{\kappa_5} Y, \quad 
    X  \xrightleftharpoons[\quad \kappa_8 \quad ]{\kappa_7} Y,  
\end{align*}
and assume that $\kappa_1, \kappa_2>0$.  We will show that this model can not satisfy the DR condition of Definition \ref{def:DR} for any choice of rate constants.

First note that for this model the DR condition  reduces to
\begin{equation}\label{DRcond4}
\kappa_1  = \kappa_2 x(t)  y(t)  \quad  \iff   \quad x(t)   = \frac{\kappa_1}{\kappa_2 } y(t)^{-1},
\end{equation}
where $x(t)$ and $y(t)$ are the solutions to the associated deterministic model \eqref{eq:mass-action}, and we are assuming that $y(t)>0$ for all $t \ge 0$. Assuming the DR condition holds, the associated deterministic model is 
\begin{equation}
\begin{split}\label{ex4_deq}
\frac{dx}{dt} &=  \kappa_3  + \kappa_8 y-  (\kappa_4 +\kappa_7)x \qquad \qquad x(0) = x_0  \\
\frac{dy}{dt} &= \kappa_5  +\kappa_7 x - (\kappa_6 +\kappa_8) y   \qquad \qquad y(0) = y_0.
\end{split}\end{equation}
Instead of solving this system explicitly, which leads to quite a messy solution, we note that \eqref{DRcond4} implies  
\[
	\frac{dx}{dt} = -\frac{\kappa_1}{\kappa_2} y^{-2} \frac{dy}{dt}.
\]
Plugging \eqref{ex4_deq} into the above equation yields
\[
  \kappa_3  + \kappa_8 y-  (\kappa_4 +\kappa_7)x =  -\frac{\kappa_1}{\kappa_2} y^{-2} \left( \kappa_5  +\kappa_7 x - (\kappa_6 +\kappa_8) y \right),
\]
which, after again using that we must have $x = \frac{\kappa_1}{\kappa_2}y^{-1}$ due to \eqref{DRcond4}, becomes
\[
 \kappa_3  + \kappa_8 y- (\kappa_4 + \kappa_7) \frac{\kappa_1 }{\kappa_2} y^{-1}   =   -\kappa_5 \frac{\kappa_1}{\kappa_2} y^{-2}  - \kappa_7\frac{\kappa_1^2}{\kappa_2^2} y^{-3} + (\kappa_6 +\kappa_8)\frac{\kappa_1}{\kappa_2} y^{-1}
\]
or
\[
\kappa_3 y^3  + \kappa_8 y^2- \left[ (\kappa_4 + \kappa_7) \frac{\kappa_1 }{\kappa_2} + (\kappa_6 +\kappa_8)\frac{\kappa_1}{\kappa_2} \right] y^{2}     + \kappa_5 \frac{\kappa_1}{\kappa_2}  y + \kappa_7\frac{\kappa_1^2}{\kappa_2^2}=0.
\]
We have assumed that $y(t)$ is a nonconstant solution of the system, so the equation above implies the associated polynomial has an infinite number of roots.  Of course, this can not be as a third degree polynomial  has at most 3 roots.  Hence, we may conclude that each of the coefficients of the above polynomial must be zero.  Combining this fact with the assumption that $\kappa_1, \kappa_2>0$ we find
\[
	\kappa_3 = \kappa_4 = \kappa_5 = \kappa_6 = \kappa_7 = \kappa_8 = 0.
\]
Hence, the only possibility is if the entire network is $\emptyset \xrightleftharpoons[\quad \kappa_2 \quad ]{\kappa_1}  X + Y$.  However, then there can not be a nonconstant solution that satisfies the DR condition as $\kappa_1 = \kappa_2 x(t) y(t)$ implies that $x(t),y(t)$ is at equilibrium (thereby yielding a constant solution).\hfill $\triangle$
\end{example}

The logic at the end of the previous example can be used to characterize all one-dimensional models that satisfy the DR condition.

\begin{prop}\label{DR1species}
Consider a reaction network $\{\S,\C,\Re\}$ with one species, i.e., $\|\S\| = 1$ and suppose that the initial distribution of the associated Markov model satisfies \eqref{initial_distribution}.  Then the solution to the forward equation \eqref{eq:CME} is given by \eqref{7697679} for some nontrivial process $c(t)$ if and only if the reaction network is of first order, in which case $\C = \{\emptyset, X\}$.
\end{prop}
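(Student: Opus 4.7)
The plan is to apply Theorem \ref{main_theorem} together with Lemma \ref{newlemma} to reduce the question to a purely algebraic one about the DR condition, and then to adapt the polynomial-identity argument used at the end of Example \ref{xplusy}. The backward direction is essentially immediate: if $\mathcal{C}=\{\emptyset,X\}$ then the network is first order, so the DR condition of Definition \ref{def:DR} is vacuous (no complex $z\in\mathcal{C}$ has $\|z\|_1\ge 2$), and Theorem \ref{main_theorem} gives the product-form Poisson representation \eqref{7697679} with $c(t)$ being the solution to \eqref{eq:mass-action}; since that ODE is linear and scalar, a generic choice of $\tilde c$ away from the (possible) equilibrium produces a nontrivial $c(t)$.

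For the forward direction I would start from the assumption that there is a nontrivial solution of the form \eqref{7697679}. Theorem \ref{main_theorem} then forces the DR condition, and Lemma \ref{newlemma} tells us that under this condition the ODE \eqref{eq:mass-action} collapses to a linear scalar equation $\dot c = a - bc$ with $c(t)>0$ for all $t\ge 0$. Solving explicitly, $c(t)$ equals either $A+Be^{-bt}$ or $A+Bt$, and in either case, being nontrivial means $c(t)$ traces out an infinite subset of $\R_{>0}$ as $t$ varies.

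Now I would invoke the DR condition for each complex $nX\in\mathcal{C}$ with $n\ge 2$: writing it out, it becomes the polynomial equation
\begin{equation*}
\Big(\sum_{k:\,y_k=nX}\kappa_k\Big)\,c(t)^n \;=\; \sum_{k:\,y_k'=nX}\kappa_k\, c(t)^{y_k},
\end{equation*}
in the scalar variable $c(t)$. Since $c(t)$ takes infinitely many values, both sides must agree as polynomials in $c$. The crucial observation, analogous to the one in Example \ref{xplusy}, is that on the right-hand side every source complex $y_k$ satisfies $y_k\neq nX$ (reactions cannot have equal source and product), so no term of degree $n$ appears on the right. Matching the coefficient of $c^n$ therefore forces $\sum_{k:y_k=nX}\kappa_k = 0$; since all $\kappa_k\ge 0$, every reaction with source $nX$ has rate constant zero. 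The right-hand side must then also vanish identically, and matching the remaining coefficients yields $\sum_{k:y_k'=nX,\,y_k=mX}\kappa_k=0$ for each $m$, so every reaction with product $nX$ also has zero rate constant. Hence $nX$ participates in no (effective) reaction, and by the standing requirement that every complex appear in some reaction, $nX\notin\mathcal{C}$.

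This eliminates every complex of order two or higher, so $\mathcal{C}\subseteq\{\emptyset,X\}$. Since the lone species $X$ must appear in some complex and the network must contain at least one reaction (otherwise $c(t)$ would be constant, contradicting nontriviality), we conclude $\mathcal{C}=\{\emptyset,X\}$, completing the proof. The only step requiring care is the passage from ``DR holds for all $t$'' to ``DR holds as a polynomial identity in $c$''; this is precisely where the one-dimensionality is essential, because the linearization from Lemma \ref{newlemma} provides an explicit $c(t)$ that visits infinitely many values, which would not follow so cleanly in higher dimensions.
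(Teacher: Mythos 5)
Your proposal is correct and follows essentially the same route as the paper: reduce to the DR condition via Theorem \ref{main_theorem}, observe that for a single species the DR condition for a complex $nX$ with $n\ge 2$ is a polynomial identity in the scalar $c(t)$ whose degree-$n$ term cannot be matched by the right-hand side (since source and product complexes differ), and conclude from the nontriviality of $c(t)$ that all the relevant rate constants vanish. The paper phrases this as ``a nonzero polynomial with infinitely many roots'' rather than matching coefficients, and does not need your detour through Lemma \ref{newlemma}, but these are cosmetic differences.
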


\begin{proof}
Of course, if the system is first order, then the DR condition automatically holds and Theorem \ref{main_theorem} implies that  the solution to the forward equation \eqref{eq:CME} is given by \eqref{7697679}.

We now show the other direction, and the proof will proceed by contradiction.  Thus, suppose that there is a complex of the form $z = k X$ for some $k \ge 2$, and suppose that the solution to the forward equation \eqref{eq:CME} is given by \eqref{7697679} for some nontrivial process $c(t)$.   By Theorem \ref{main_theorem}, we may assume that the solution to the deterministic model \eqref{eq:mass-action} satisfies the DR condition of Definition \ref{def:DR} for the complex $z$.  That is, 
\begin{align*}
 \sum_{k:y_k = z} \kappa_k c(t)^{z}=\sum_{k:y_k^\prime = z} \kappa_k c(t)^{y_k},
\end{align*}
where, as usual, the sum on the left is over those reactions with source complex $z$ and the sum on the right is over those with product complex $z$.  Consider the function
\[
f(x) =  \sum_{k:y_k = z} \kappa_k x^{\|z\|_1} - \sum_{k:y_k^\prime = z} \kappa_k x^{\|y_k\|_1}.
\]
Note that $f$ is a polynomial in $x$.  Also, each sum is nonempty and, because $\|y_k\|_1 \ne \|z\|_1$ for each term in the second sum, $f$ is not identically equal to zero. Thus, $f$ has a finite number of roots.  However, $f(c(t)) = 0$, and $c(t)$ is nontrivial, implying $f$ has an infinite number of roots, which is a contradiction.  Thus, the result is shown.
\end{proof} 

\edited{
The next example will demonstrate how a key piece of the proof of Lemma \ref{newlemma} will proceed. In particular, we will assume the DR condition holds, and will then conclude that the nonlinear terms from the higher-order monomials can be written as a linear combination of the first-order monomials.  We will then be able to conclude that no non-constant solution to the rate equations exist that satisfies the DR condition.}
\begin{example}\label{example5}
\edited{Consider the reaction network with the following network diagram,
\begin{align}
   X \xrightleftharpoons[\quad \kappa_2 \quad ]{\kappa_1} 2X + Y \xrightleftharpoons[\quad \kappa_4 \quad ]{\kappa_3} X+2Y \xrightleftharpoons[\quad \kappa_6 \quad ]{\kappa_5} Y,
     \end{align}
where the rate constants are placed next to their respective reaction arrows. Notice that the DR condition \ref{def:DR} for the complexes $2X+Y$ and $X+2Y$ can be simplified to the equations
 \begin{align}
 \begin{split} \label{DRcond598988}
 (\kappa_2 + \kappa_3) x(t)^2 y(t) &= \kappa_1 x(t) +\kappa_4 x(t) y(t)^2, \\ 
 (\kappa_4+\kappa_5) x(t) y(t)^2 &= \kappa_3 x(t)^2 y(t)+ \kappa_6 y (t), 
 \end{split}
 \end{align}
 respectively, where $(x(t) ,y(t))$ is the solution to the associated deterministic model \eqref{eq:mass-action}, 
\begin{equation}
\begin{split}\label{ex5_deq}
\frac{dx}{dt} &= \kappa_1 x - ( \kappa_2 +\kappa_3 )x^2 y + ( \kappa_4 - \kappa_5) xy^2  + \kappa_6 y,  \qquad \qquad x(0) = x_0  \\
\frac{dy}{dt} &=  \kappa_1 x - ( \kappa_2 - \kappa_3 )x^2 y - ( \kappa_4 + \kappa_5) xy^2 + \kappa_6 y,  \qquad \qquad y(0) = y_0.
\end{split}\end{equation}
At first glance the resulting dynamics appear nonlinear, since these higher order monomials do not cancel out immediately when \eqref{DRcond598988} is used. Nevertheless, we can rewrite the DR condition \eqref{DRcond598988} as follows
 \begin{align*}
(\kappa_2 + \kappa_3) x(t)^2 y(t)  - \quad \qquad \kappa_4   x(t) y(t)^2 &= \kappa_1 x(t), \\ 
- \kappa_3 x(t)^2 y(t) + (\kappa_4+\kappa_5) x(t) y(t)^2 &=  \kappa_6 y (t),
 \end{align*}
and notice that it is in the form of a vector equation $A \tilde x = b$, where 
\[
A = 
    \begin{bmatrix}
      \kappa_2 + \kappa_3  & - \kappa_4 \\
       -\kappa_3 & \kappa_4+\kappa_5 \\
    \end{bmatrix}, \qquad 
\tilde x = \begin{bmatrix}
	x(t)^2 y(t)  \\
	x(t) y(t)^2 \\
	\end{bmatrix}, \qquad 
b = \begin{bmatrix}
	\kappa_1 x(t)  \\
	\kappa_6 y (t) \\
	\end{bmatrix}.
\]
It is easy to observe that $A$ is nonsingular, and its inverse can be calculated as
\[
A^{-1} = \frac{1}{(\kappa_2+\kappa_3)(\kappa_4+\kappa_5) - \kappa_3 \kappa_4}
    \begin{bmatrix}
      \kappa_4 + \kappa_5 &  \kappa_4 \\
        \kappa_3 & \kappa_2 + \kappa_3 \\
    \end{bmatrix}.
\]
Hence $\tilde x$ can be written as 
\begin{align}\label{eq:DRcond5}
\tilde x  = \begin{bmatrix}
	x(t)^2 y(t)  \\
	x(t) y(t)^2 \\
	\end{bmatrix} 
= A^{-1} b = \frac{1}{(\kappa_2+\kappa_3)(\kappa_4+\kappa_5) - \kappa_3 \kappa_4} 
	\begin{bmatrix}
	(\kappa_4 + \kappa_5)\kappa_1 x(t) + \kappa_4 \kappa_6 y (t)	\\
	\kappa_1\kappa_3 x(t) + (\kappa_2 + \kappa_3) \kappa_6 y(t) \\
	\end{bmatrix}. 
\end{align}
Therefore, assuming the DR condition holds, we may represent the higher order monomials as a linear combination of first order monomials. Plugging \eqref{eq:DRcond5} back into the ODE \eqref{ex5_deq}, we get
\begin{align*}
\frac{dx}{dt} &= \kappa_1 x - ( \kappa_2 +\kappa_3 )\frac{(\kappa_4 + \kappa_5)\kappa_1 x + \kappa_4 \kappa_6 y }{(\kappa_2+\kappa_3)(\kappa_4+\kappa_5) - \kappa_3 \kappa_4}  + ( \kappa_4 - \kappa_5)\frac{\kappa_1\kappa_3 x + (\kappa_2 + \kappa_3) \kappa_6 y}{(\kappa_2+\kappa_3)(\kappa_4+\kappa_5) - \kappa_3 \kappa_4}  + \kappa_6 y \\
& = - \frac{\kappa_1 \kappa_3\kappa_5}{(\kappa_2+\kappa_3)(\kappa_4+\kappa_5) - \kappa_3 \kappa_4} x +  \frac{\kappa_2 \kappa_4 \kappa_6}{(\kappa_2+\kappa_3)(\kappa_4+\kappa_5) - \kappa_3 \kappa_4} y,  \\
\frac{dy}{dt} &= \kappa_1 x - ( \kappa_2 - \kappa_3 )\frac{(\kappa_4 + \kappa_5)\kappa_1 x + \kappa_4 \kappa_6 y }{(\kappa_2+\kappa_3)(\kappa_4+\kappa_5) - \kappa_3 \kappa_4}  -   ( \kappa_4 + \kappa_5) \frac{\kappa_1\kappa_3 x + (\kappa_2 + \kappa_3) \kappa_6 y}{(\kappa_2+\kappa_3)(\kappa_4+\kappa_5) - \kappa_3 \kappa_4}  + \kappa_6 y \\
& =  \frac{\kappa_1 \kappa_3\kappa_5}{(\kappa_2+\kappa_3)(\kappa_4+\kappa_5) - \kappa_3 \kappa_4} x -  \frac{\kappa_2 \kappa_4 \kappa_6}{(\kappa_2+\kappa_3)(\kappa_4+\kappa_5) - \kappa_3 \kappa_4} y,
\end{align*}
where we arrive at a linear model.  Also notice that  $\frac{dx}{dt} + \frac{dy}{dt} = 0$, and so we must have $x(t)+y(t) = x_0 + y_0$ for all $t\geq 0$. \\
}

\edited{
However, there is no non-constant solution to \eqref{ex5_deq} satisfying the DR condition \eqref{eq:DRcond5}. Specifically, the DR condition for complex $2X+Y$ becomes 
\begin{align*}
(\kappa_2 + \kappa_3) x(t)^2 (x_0 +y_0 - x(t)) &= \kappa_1 x(t) +\kappa_4 x(t) (x_0+y_0-x(t) )^2. 
\end{align*}
By virtue of the proof of Proposition \ref{DR1species}, we equate the coefficients on both sides and we get $\kappa_1 = \kappa_2 = \kappa_3 = \kappa_4 = 0$. Similarly using the DR condition for $X + 2Y$, we get $\kappa_5 = \kappa_6 = 0$. 
\hfill $\triangle$
}
\end{example}

\edited{
It is known that complex balanced models are necessarily weakly reversible \cite{F1}. Since the DR condition implies complex balancing for all higher order complexes, one may expect that part of the network to be  weakly reversible. However, the next example  shows that this claim is incorrect. 
\begin{example}\label{example6}
Consider the network with the following  diagram,
\begin{align*}
   Z  \xrightarrow{ \quad 2  \quad  } 2X \xrightarrow{\quad 2  \quad } 2Y \xrightarrow{ \quad 2  \quad  }  W\quad,\quad X \xrightarrow{ \quad 1  \quad  } \emptyset \quad,  \quad Y \xrightarrow{ \quad 1  \quad  } \emptyset. 
\end{align*}
The DR condition \ref{def:DR} for the complexes $2X$ and $2Y$ can be simplified to the equations
 \begin{align}
 \begin{split} \label{DRcond5989818}
2 z(t) =2 x^2(t) \qquad \qquad 2 x^2(t) = 2 y^2(t)
 \end{split}
 \end{align}
where $(x(t),y(t),z(t),w(t))$ is the solution to the associated deterministic model \eqref{eq:mass-action}. For the DR condition to be satisfied, we utilize \eqref{DRcond5989818} in the deterministic model to get
\begin{equation}
\begin{split}\label{ex8_deq}
\frac{dx}{dt} &=  2z(t) - 2x^2(t) - x(t) = -x(t),  \qquad \qquad x(0) = x_0  \\
\frac{dy}{dt} &=  2x^2(t) - 2y^2(t) - y(t) = -y(t),  \qquad \qquad y(0) = y_0 \\
\frac{dz}{dt} &=  -2 z(t),  \qquad \qquad \hspace{1.5in} z(0) = z_0  \\
\frac{dw}{dt} &=  2y^2(t),  \qquad \qquad \hspace{1.5in} w(0) = w_0. \\
\end{split}\end{equation}
Notice that the system of linear equations \eqref{ex8_deq} can be solved exactly for $x(t),y(t),w(t)$, and hence $z(t)$, with
\begin{equation*}
x(t) = x_0 e^{-t} , \quad y(t) = y_0 e^{-t}   , \quad z(t) =  z_0 e^{-2t} , \quad w(t) = w_0 + y_0(1-e^{-2t}). 
\end{equation*}
Hence the DR condition \eqref{DRcond5989818} holds if and only if
\begin{align*}
z_0 = x_0^2 = y_0^2. 
\end{align*}
However, note that no portion of this network, nor any of its subnetworks, are weakly reversible. 
\hfill $\triangle$
\end{example}}

\section{Acknowledgements}
We would like to thank the Isaac Newton Institute for hosting a 6 month program entitled ``Stochastic Dynamical Systems in Biology: Numerical Methods and Applications'' where this collaboration initiated.  Anderson and Yuan are currently supported by Army Research Office grant W911NF-18-1-0324. Schnoerr is currently supported by Biotechnology and Biological Sciences Research Council grant BB/P028306/1.

\appendix


\section{Proof of Lemma \ref{newlemma}}\label{appendix:A}

The proof will proceed in a manner similar to that of Example \ref{example5}, in that we will show that under the assumption that the DR condition holds, the non-linear monomials can be written as a linear combination of the linear terms.  
In order to make this precise, we require a number of definitions.  

The $i$th row of matrix $A$ is said to be \textit{strictly diagonally dominant} (SDD) if $|a_{ii}| >  \sum_{j\neq i} |a_{ij}|$. We then say that the matrix $A$ is \textit{strictly diagonally dominant} if all its rows are SDD. Similarly, the $i$th row of matrix $A$ is said to be \textit{weakly diagonally dominant} (WDD) if $|a_{ii}| \geq \sum_{j\neq i} |a_{ij}|$ and we say that the matrix $A$ is \textit{weakly diagonally dominant} if all its rows are WDD. 

There is a directed graph associated to any $m\times m$ square matrix. Its vertices are given by $\displaystyle \{1,2,....,m\}$ and its edges are defined as follows:  for $i \ne j$, there exists an edge $i\rightarrow j$ if and only if $a_{ij} \ne 0$. 

SDD matrices are always invertible \cite{horn2012matrix}. However, WDD matrices could be singular and the following lemma can  be used to identify invertibility of a WDD matrix \cite{shivakumar1974sufficient}. 

\begin{lemma}\label{lemma:WCDD}
Suppose that $A$ is WDD and that for each row $i_1$ that is not SDD, there exists a walk $ i_1 \rightarrow i_2 \rightarrow \ldots \rightarrow i_k$ in the directed graph of $A$ ending at row $i_k$, which is SDD.   Then $A$ is non-singular.
\end{lemma}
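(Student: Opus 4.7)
My plan is to prove the lemma by contradiction, following the standard argument for weakly chain diagonally dominant matrices. Suppose $A$ is singular. Then there exists a nonzero vector $x \in \mathbb{C}^m$ with $Ax = 0$. Set $M = \max_i |x_i| > 0$ and $S = \{i : |x_i| = M\}$. From the $i$-th equation of $Ax = 0$ we obtain $a_{ii} x_i = -\sum_{j \neq i} a_{ij} x_j$, and therefore
\begin{equation*}
|a_{ii}|\, M = |a_{ii}|\,|x_i| \;\leq\; \sum_{j \neq i} |a_{ij}|\,|x_j| \;\leq\; M \sum_{j \neq i} |a_{ij}|
\end{equation*}
for every $i \in S$.

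First I would rule out SDD rows from $S$: if $i \in S$ were SDD, the chain above would yield $|a_{ii}| M < |a_{ii}| M$, a contradiction. Hence every $i \in S$ is only WDD, and consequently equality must hold throughout the chain. This gives $\sum_{j \neq i} |a_{ij}|\,(M - |x_j|) = 0$, and since each summand is nonnegative, $a_{ij} \neq 0$ forces $|x_j| = M$, i.e.\ $j \in S$. Thus $S$ is closed under the out-edges of the directed graph associated with $A$.

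Next I would invoke the hypothesis. Pick any $i_1 \in S$; since $i_1$ is not SDD, there exists a walk $i_1 \to i_2 \to \cdots \to i_k$ terminating at an SDD row $i_k$. An induction on the length of the walk, using the closure property just established, shows $i_\ell \in S$ for every $\ell$, hence in particular $i_k \in S$. But this contradicts the first step, which showed that no element of $S$ can be SDD. Therefore $Ax = 0$ forces $x = 0$, and $A$ is nonsingular.

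The main (and only real) obstacle is the equality-case analysis that shows $S$ is closed under out-edges; the rest is bookkeeping. Once that closure property is in hand, the walk hypothesis does essentially all the work by transporting us from a non-SDD row in $S$ to an SDD row that must also lie in $S$, producing the desired contradiction. No additional machinery beyond the triangle inequality and the definitions of SDD/WDD is needed.
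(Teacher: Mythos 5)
Your proof is correct. Note that the paper itself does not prove this lemma --- it is quoted from \cite{shivakumar1974sufficient} --- so there is no in-paper argument to compare against; what you have written is the standard maximum-modulus argument for weakly chained diagonally dominant matrices, and it is complete: the exclusion of SDD rows from the set $S$ of maximal-modulus indices, the equality-case analysis showing $S$ is closed under out-edges, and the walk hypothesis transporting a non-SDD index of $S$ to an SDD index of $S$ together yield the contradiction with no gaps.
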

 
\vspace{.1in}

%

We restate Lemma \ref{newlemma} for the sake of reference.
\vspace{.1in}

\noindent \textbf{Lemma 2.1} \textit{
Consider a reaction network endowed with deterministic mass action kinetics.  Let $c(t)$ be the solution to the system \eqref{eq:mass-action}.  If  for $\tilde c = c(0)\in \R^d_{>0}$ we have that $c(t)$ satisfies the DR condition of Definition \ref{def:DR}, then, for this particular choice of initial condition,  the right-hand side of \eqref{eq:mass-action} is linear and  $c(t) \in \R^d_{>0}$ for all $t \ge 0$.
}
\vspace{.1in}

\begin{proof} 

We begin by noting that some  deterministic models may blow-up in finite time.  We therefore define  
\begin{align*}
	T^* =\inf \{ s  > 0 : \text{for any $m >0$, there exists $\varepsilon > 0$ such that when $s- t \leq \varepsilon$,  $\| c(t) \|_1 \geq m$ holds}\}.
	\end{align*}
	  Note that if the set is empty, then we take $T^*$ to be infinity.
	  Our first goal will be to show that $c(t) \in \R^d_{>0}$ for any $t < T^*$.  
	  
	  We therefore let $t<T^*$.  We then know that there exists an $m>0$, such that $\|c(s)\|_1\leq m$ for any $s\in [0,t]$.  Consider the $i$th component of the differential equation with $s \ge t$:
	  \begin{align*}
   \frac{d}{ds}{c_i}(s) &= \sum_k \kappa_k  c(s)^{y_{k}}(y_k' - y_k) \geq \sum_{k:y_{ki}^\prime - y_{ki}<0} \kappa_k  c(s)^{y_{k}}(y_{ki}' - y_{ki}) \\
   & \geq c_i(s) \sum_{k:y_{ki}^\prime - y_{ki}<0} \kappa_k  \frac{c(s)^{y_{k}}}{c_i(s)}(y_{ki}' - y_{ki}) \tag{since $y_{ki} \ge 1$}\\
   & \geq c_i(s) \sum_{k:y_{ki}^\prime - y_{ki}<0}  \kappa_k m^{\|y_k\|_1-1}(y_{ki}' - y_{ki}), \tag{since $\|c(s)\|_1\leq m$ for any $s\in [0,t]$}
\end{align*}
which implies $c_i(t) >0$ for any $t< T^*$.

We will now show that the dynamics of $c(t)$ are linear for $t < T^*$.  Denote the linkage classes of $\C$ by $\mathcal{L}_1, \mathcal{L}_2, \dots, \mathcal{L}_n$.  We have 
\begin{align*}
\frac{d}{dt}{c}(t) &= \sum_{k=1}^K  \kappa_k  c(t)^{y_{k}}(y_k' - y_k) = \sum_{z\in \mathcal{C}} z \left( \sum_{k: y_k^\prime=z} \kappa_k  c(t)^{y_{k}} - \sum_{k: y_k=z} \kappa_k  c(t)^{y_{k}} \right) \\
& = \sum_{\ell} \sum_{z\in \mathcal{L}_\ell} z \left( \sum_{k: y_k^\prime=z} \kappa_k  c(t)^{y_{k}} - \sum_{k: y_k=z} \kappa_k  c(t)^{y_{k}} \right). 
\end{align*}

Our goal is to show that for any linkage class $\mathcal{L}_\ell$, the summation 
\begin{align}\label{eq:linkageclass}
\sum_{z\in \mathcal{L}_\ell} z \left( \sum_{k: y_k^\prime=z} \kappa_k  c(t)^{y_{k}} - \sum_{k: y_k=z} \kappa_k  c(t)^{y_{k}} \right)
\end{align}
only contributes linear terms to the dynamics of the process,  and hence the overall dynamics of the deterministic model  \eqref{eq:mass-action} is linear. 

We now restrict ourselves to the summation \eqref{eq:linkageclass}. There are three cases that we consider. 

\begin{description}
\item[Case 1.] Suppose the linkage class $\mathcal{L}_\ell$ contains only higher order complexes. Then every  term in the summation \eqref{eq:linkageclass} is zero by the DR condition \eqref{eq:complex_balance}.  Thus,
\begin{align*}
 \sum_{z\in \mathcal{L}_\ell} z \left( \sum_{k: y_k^\prime=z} \kappa_k  c(t)^{y_{k}} - \sum_{k: y_k=z} \kappa_k  c(t)^{y_{k}} \right)  = \sum_{z\in \mathcal{L}_\ell} 0  =  0. 
\end{align*}

\item[Case 2.] Suppose the linkage class $\mathcal{L}_\ell$ contains only \edited{zeroth-order} and first-order complexes, then 
\begin{align*}
 \sum_{z\in \mathcal{L}_\ell} z \left( \sum_{k: y_k^\prime=z} \kappa_k  c(t)^{y_{k}} - \sum_{k: y_k=z} \kappa_k  c(t)^{y_{k}} \right) 
\end{align*}
only contributes linearly. 

\item[Case 3.] We now suppose the linkage class $\mathcal{L}_\ell$ contains both higher-order and \edited{lower-order} complexes. Suppose $z_1,z_2,\dots, z_m$ are the higher order complexes and  that $z_{m+1},\dots, z_{|\mathcal{L}_\ell|}$ are \edited{zeroth-order} and first-order complexes. We will follow the idea in Example \ref{example5} by moving all the nonlinear monomials to one side of the equation, and solving for them in terms of the linear terms.  To do so, we change notation slightly by explicitly enumerating the reactions and their rate constants by the reactions themselves.  That is, for $y \to y'\in \mathcal{R}$, we write $\kappa_{y \to y'}$.  We stress that this change is isolated to this portion of the proof.

After making this change in notation, we can write the DR condition for complex  $z_i$, $i=1,\ldots, m$, as
\begin{align*}
\sum_{j =1}^{|\mathcal{L}_\ell|} \kappa_{z_i \to z_j} c(t)^{z_i}  - \sum_{j=1}^m  \kappa_{z_j \to z_i} c(t)^{z_j}  
=\sum_{j=m+1}^{|\mathcal{L}_\ell|} \kappa_{z_j  \to z_i} c(t)^{z_j},
\end{align*}
where we take $\kappa_{y \to y'} = 0$ if $y\to y' \notin \mathcal{R}$.

We have $m$ such conditions, and so we can rewrite the DR condition \eqref{eq:complex_balance} as a vector equation $A \tilde x = b$, where
\begin{enumerate}[(1)]
\item $\tilde x$ is an $m\times 1$ column vector whose $i^{th}$ component is given by $\tilde x_i = c(t)^{z_i}$ for $i=1,\ldots, m$. That is, the vector $\tilde x$ contains all the higher order monomials in the linkage class $\mathcal{L}_\ell$. 
\item $b$ is an $m \times 1$ column vector whose $i^{th}$ component is given by
\begin{align*}
b_i =\sum_{j=m+1}^{|\mathcal{L}_\ell|} \kappa_{z_j  \to z_i} c(t)^{z_j},
\end{align*}
which are all linear.

\item $A$ is an $m\times m$ matrix whose entries are defined as
\begin{align}\label{eq:matrixA}
A_{ii} = \sum_{j =1}^{|\mathcal{L}_\ell|} \kappa_{z_i \to z_j}  \geq 0 \qquad  \text{ and for $j\neq i$,}\qquad  A_{ij} = -  \kappa_{z_j \to z_i} \le 0.
\end{align}
\end{enumerate}

Hence $A_{ij} < 0$ if and only if $z_j\rightarrow z_i \in \mathcal{R}$. Notice that if we can show $A$ is invertible, then we can write $\tilde x= A^{-1}b$.  In this situation, all the higher order monomials can be expressed using first order monomials and hence \eqref{eq:linkageclass} 
can be written as linear combinations of first-order monomials and the dynamics will be linear. 
\vspace{.1in}

It will be more convenient to work with the transpose matrix, $A^T$. The row sums of $A^{T}$ corresponds to column sums of $A$, hence for the $i^{th}$ row 
\begin{align}\label{eq:asdasdsadasdasd}
 \sum_{j=1}^m  (A^T)_{ij} &= A_{ii} + \sum_{j\neq i}^m A_{ji} =  \sum_{j =1}^{|\mathcal{L}_\ell|} \kappa_{z_i \to z_j} - \sum_{j=1}^m \kappa_{z_i \to z_j}  =    \sum_{j = m+1}^{|\mathcal{L}_\ell|} \kappa_{z_i \to z_j} \geq 0,
\end{align}
which implies $A^T$ is weakly diagonally dominant matrix.  Moreover, row $i$ is not SDD if and only if $ \kappa_{z_i \to z_j} = 0$ for $j = m+1,\ldots,|\mathcal{L}_\ell |$, i.e., there is no reaction from $z_i$ to a lower order complex. To finish our proof that $A$ is invertible, we will prove the following claim. 

\vspace{.1in}

\textbf{Claim: } If $c(t)$ satisfies the DR condition of Definition \ref{def:DR}, then the path condition in Lemma \ref{lemma:WCDD} holds for $A^T$.

\begin{proof}[Proof of the claim.] 
First, we consider the associated directed graph of the matrix $A^T$. Notice that by \eqref{eq:matrixA},  $(A^T)_{ij} \neq 0$ if and only if $\kappa_{z_i\to z_j } > 0$, i.e. , $z_i \to z_j \in \Re$. Hence the associated directed graph is equivalent to our reaction graph, where row $i$ corresponds to complex $z_i$ in the reaction graph. Then by \eqref{eq:asdasdsadasdasd}, row $i$ is not SDD if and only if $ \kappa_{z_i \to z_j} = 0$ for $j = m+1,\ldots,|\mathcal{L}_\ell |$, i.e., there is no reaction from $z_i$ to a lower order complex. 

Suppose, in order to find a contradiction, that the path condition does not hold for $A^T$.  Specifically, we assume there  exists a row $i_1$ which can not reach a row that is SDD in the associated directed graph. Then, consider the following set of complexes 
\[
  \tilde{C} = \{ z\in \mathcal{L}_\ell : \text{ there is a path from $z_{i_1}$ to $z$} \} \subset \mathcal{L}_\ell.
\]

Then $z\notin \tilde C$ for any $\|z\|_1\leq 1$, since otherwise, there exists a reaction from higher order complex to lower order complex along the path from $z_{i_1}$ to $z$, which contradicts with the fact that all rows are not SDD. Consequently, $\|z \|_1\geq 2$ for any $z\in \tilde C$. Therefore, by the DR condition for all complexes $z\in \tilde{C}$, we have
\begin{align*}
  \sum_{z\in \tilde{C} }  \sum_{k: y_k^\prime=z} \kappa_k  c(t)^{y_{k}} = \sum _{z\in \tilde{C} } \sum_{k: y_k=z} \kappa_k  c(t)^{y_{k}} ,
\end{align*}
which immediately leads to the equation,
\begin{align}\label{eq:balancing}
 \sum_{k: y_k^\prime \in \tilde{C} } \kappa_k  c(t)^{y_{k}} = \sum_{k: y_k \in \tilde{C} } \kappa_k  c(t)^{y_{k}}.
\end{align}
If $y_k\in \tilde C$, then $y_k^\prime \in \tilde C$ since there exists a path connecting $z_{i_1}$ and $y_k^\prime$ via $y_k$. That is, 
\[
	\{k: y_k^\prime \in \tilde{C} \} \supseteq \{k: y_k \in \tilde{C} \}.
\]
 Given that they have the same summands in \eqref{eq:balancing} and $c(t) > 0$ for $t< T^*$, the index sets are equal $\{ k: y_k^\prime \in \tilde{C}  \} = \{ k: y_k \in \tilde{C}  \}$. However this would imply $\tilde{C}$ is a linkage class by itself, as for any complex $z \in \tilde C$ and $z^\prime \notin \tilde C$, $z\to z^\prime \notin \Re$ and $z^\prime\to  z\notin \Re$. Since $\tilde{C}$ contained strictly inside $\mathcal{L}_\ell$ (first-order complexes are not in $\tilde C$), we get a contradiction. Hence the path condition in Lemma \ref{lemma:WCDD} holds for $A^T$. 
\end{proof}

Given the claim, and by Lemma \ref{lemma:WCDD}, we get $A$ is invertible, and hence \eqref{eq:linkageclass} can be written as linear combinations of first order monomials. 
\end{description}

In conclusion, for each linkages class $\mathcal{L}_\ell$, the summation \eqref{eq:linkageclass} contributes at most linear monomials to the dynamics. Hence the right-hand side of \eqref{eq:mass-action} is linear.

This analysis held under the assumption that $t < T^*$.  However, because we can now conclude that the dynamics are linear for $t < T^*$, we must have that $T^* = \infty$, and the proof is now complete.
\end{proof}

\section{Proofs of Lemmas \ref{Calculation} and \ref{Function_Independent}}
\label{appendix:B}
We restate Lemma \ref{Calculation} for the sake of reference.

\vspace{.1in}

\noindent \textbf{Lemma 3.2} \textit{
Suppose $P_\mu(x,t)$ is given by \eqref{poissonAT} with $c(t) \in \R_{>0}^d$ for all $t \ge 0$.   Then $P_\mu(x,t)$ is the solution to the Kolmogorov forward equation  \eqref{eq:CME} if and only if $c(t)$ satisfies the deterministic equation \eqref{eq:mass-action} and 
\begin{align}
 \sum_{k} \kappa_k c(t)^{y_k}  \left[  g_{x,c(t)}(y_k^\prime) -g_{x,c(t)}(y_k)  \right] = 0 \tag{20}
\end{align}
where for each $x \in \Z^d_{\ge 0}$ and $c \in \R^d_{ > 0}$,
\begin{align}
g_{x,c}(y_k) =  \sum_{j=1}^d \left( \frac{x_j}{c_j}-1\right) y_{kj} -  \frac{x !}{(x -y_{k})!} c^{-y_k} +1. \tag{21}
\end{align}
Moreover, if $\|y_k\|_1 \leq 1$, then $g_{x,c} (y_k) = 0$. 
}

\begin{proof}
We will first assume that $P_\mu(x,t)$ is as in \eqref{poissonAT} and that it is the solution to the Kolmogorov forward equation\eqref{eq:CME}. Our goal is to show that \eqref{eq:eq} holds.   

By Proposition \ref{main_theorem}, $c(t)$ satisfies \eqref{eq:mass-action}.  In particular, it is differentiable.   Because  $P_\mu(x,t)$ is as in \eqref{poissonAT}, the left-hand side of \eqref{eq:CME}  satisfies
\begin{align}
\frac{d}{dt} P_\mu(x,t) &= \frac{d}{dt} \left(  \prod_{i=1}^d  e^{-c_i(t)}\frac{c_i(t)^{x_i}}{x_i ! } \right) \notag\\
& = \sum_{j=1}^d  \prod_{i\neq j} e^{-c_i(t)}\frac{c_i(t)^{x_i}}{x_i ! } \left(-c_j^\prime(t)e^{-c_j(t)}\frac{c_j(t)^{x_j}}{x_j ! }  + x_je^{-c_j(t)}\frac{c_j(t)^{x_j-1}}{x_j ! } c_j^\prime(t) \right) \notag\\
& = \prod_{i=1}^d e^{-c_i(t)}\frac{c_i(t)^{x_i}}{x_i ! } \sum_{j=1}^d \left( -c_j^\prime (t) + x_j \frac{c_j^\prime (t) }{c_j(t) } \right)\notag \\
& =  e^{-c(t)}\frac{c(t)^{x}}{x ! }  \sum_{j=1}^d c_j^\prime (t) \left( \frac{x_j}{c_j(t) }-1\right) \notag\\
& =  e^{-c(t)}\frac{c(t)^{x}}{x ! }  \sum_{j=1}^d \sum_{k=1}^K \kappa_k c(t)^{y_k} (y_{kj}^{\prime} - y_{kj}) \left( \frac{x_j}{c_j(t) }-1\right) \notag\\
& = \left( e^{-c(t)}\frac{c(t)^{x}}{x ! } \right) \sum_{k=1}^K \kappa_k c(t)^{y_k}  \sum_{j=1}^d \left( \frac{x_j}{c_j(t) }-1\right)  (y_{kj}^{\prime} - y_{kj}).\label{eq:567876545678}
\end{align}
The right hand side of \eqref{eq:CME} is
\begin{align}
 \sum_{k=1}^K &\lambda_{k}(x-\zeta_k) P_\mu(x-\zeta_k ,t)  - \sum_{k=1}^K \lambda_k(x) P_\mu(x,t)\notag \\
=  & \sum_{k=1}^K   \kappa_k \left( \frac{(x-\zeta_k)!}{(x-\zeta_k-y_k)!}  e^{-c(t)}\frac{c(t)^{x-\zeta_{k}}}{(x-\zeta_{k} )! }\right)   - \sum_{k=1}^K  \kappa_k \left(  \frac{x !}{(x -y_{k})!}  e^{-c(t)}\frac{c(t)^{x}}{x! } \right)\notag\\ 
= & \left( e^{-c(t)}\frac{c(t)^{x}}{x ! } \right)  \sum_{k=1}^K  \kappa_k     \left( \frac{  x!}{(x -\zeta_{k} - y_{k})!} c(t)^{-\zeta_{k}}  -  \frac{x !}{(x -y_{k})!}  \right) \notag\\
= & \left( e^{-c(t)}\frac{c(t)^{x}}{x ! } \right)   \sum_{k=1}^K  \kappa_k  c(t)^{y_k}  \left( \frac{  x!}{(x -y_{k}^\prime)!} c(t)^{-y_k^\prime} -  \frac{x !}{(x -y_{k})!} c(t)^{-y_k} \right).\label{eq:7658924028374}
\end{align}
Since $P_\mu(x,t)$ is the solution to \eqref{eq:CME}, we must have that \eqref{eq:567876545678} and \eqref{eq:7658924028374} are equal.  That is,
\begin{align}
&\sum_{k=1}^K \kappa_k c(t)^{y_k}  \left(  \sum_{j=1}^d \bigg[\left( \frac{x_j}{c_j(t) }-1\right) (y_{kj}^{\prime} - y_{kj}) -  \left( \frac{  x!}{(x -y_{k}^\prime)!} c(t)^{-y_k^\prime} -  \frac{x !}{(x -y_{k})!} c(t)^{-y_k} \right) \bigg] \right)= 0.
 \label{eq:jkdfakjdkja}
\end{align}

Define the following function 
\begin{align*}
f_{x,c}(y_k) =  \sum_{j=1}^d \left( \frac{x_j}{c_j }-1\right) y_{kj} -  \frac{x !}{(x -y_{k})!} c^{-y_k} 
\end{align*}
and let $\displaystyle g_{x,c}(y_k) = f_{x,c} (y_k) +1$.  Then we can rewrite  equation \eqref{eq:jkdfakjdkja} above as 
\begin{align*}
 \sum_{k=1}^K \kappa_k c(t)^{y_k}  \left[  g_{x,c(t)}(y_k^\prime) - g_{x,c(t)}(y_k)  \right] = 0,
\end{align*}
which shows \eqref{eq:eq} holds.   

 To show the other direction, suppose $c(t)$ is the solution to the deterministic equation \eqref{eq:mass-action} and that  \eqref{eq:eq} is satisfied.  We must show that  $P_\mu(x,t)$ as in \eqref{7697679} is the  solution to the Kolmogorov forward equation \eqref{eq:CME}.   However, this follows by reversing the steps above.

All that remains is to demonstrate that if $\|y_k\|_1 \le 1$, then $g_{x,c}(y_k) = 0$.
There are only two cases that need consideration.

\vspace{.1in}

\noindent \textit{Case 1.} If $y_k =  \vec{0} $, then 
\begin{align*}
g_{x,c}(y_k) =  \sum_{j=1}^d \left( \frac{x_j}{c_j }-1\right) y_{kj} - \frac{x !}{(x -y_{k})!} c^{-y_k} +1 = 0-1 +1= 0.
\end{align*}

\vspace{.1in}

\noindent \textit{Case 2.} If $y_k =  e_\ell$, the vector whose $\ell^{th}$ entry is 1 and all other entries are zero, then 
\begin{align*}
g_{x,c}(y_k) =  \sum_{j=1}^d \left( \frac{x_j}{c_j }-1\right) y_{kj} -  \frac{x !}{(x -y_{k})!} c^{-y_k} +1 = \frac{x_\ell}{c_\ell} - 1 - \frac{x_\ell}{c_\ell}  +1= 0.
\end{align*}
Hence, the proof is complete.
\end{proof}

We restate Lemma \ref{Function_Independent} for the sake of reference.\\

\noindent \textbf{Lemma 3.3} \textit{
Let $\{z_1,z_2, ...., z_m\} \subset \C$ be the collection of complexes that are at least binary (i.e.~$\|z_i\|_1 \ge 2$). Fix a value $c \in \R^d_{>0}$.  For each $i \in \{1,\dots, m\}$ let $f_i: \Z_{\ge 0}^d \rightarrow \R$ be defined as
\[
	f_i(x) = g_{x,c}(z_i),
\]
where the functions $g_{x,c}$ are defined in the proof of Lemma \ref{Calculation}.
Then $\{ f_i\}_{i=1}^m$ are linear independent. 
}

\vspace{.1in}

The main idea of the proof rests on noticing that this collection of functions consists of polynomials of different leading orders. An example will be helpful to illustrate.  Let us turn to the binary case with two species, and denote $\mathcal{C} = \{2e_1, 2e_2, e_1+e_2\} $. Then the relevant functions are
\begin{align*}
f_1(x) & = 2 \left( \frac{x_1}{c_1 }-1\right)  -  \frac{x_1(x_1-1)}{c_1^2 }  +1 = - \frac{x_1^2}{c_1^2} + \left( 2+\frac{1}{c_1} \right) \frac{x_1}{c_1} -1\\
f_2(x) & = 2 \left( \frac{x_2}{c_2 }-1\right)  -  \frac{x_2(x_2-1)}{c_2^2 }  +1 = - \frac{x_2^2}{c_2^2} + \left( 2+\frac{1}{c_2} \right) \frac{x_2}{c_2} -1\\
f_3(x) & =  \left( \frac{x_1}{c_1 }-1\right)+\left( \frac{x_2}{c_2}-1\right)  -  \frac{x_1 x_2}{c_1c_2 }  +1 = -\frac{x_1 x_2}{c_1c_2 } + \frac{x_1}{c_1} + \frac{x_2}{c_2} -1.
\end{align*}
To see why they are linearly independent, let $\alpha_i$ be such that $\alpha_1 f_1(x) + \alpha_2 f_2(x) + \alpha_3 f_3(x) = 0$ for all $x$. Since the leading powers of the monomials are different, we therefore conclude that we must have $\alpha_1 = \alpha_2 = \alpha_3 = 0$.

\begin{proof}[Proof of Lemma \ref{Function_Independent}]
Suppose there exists $\alpha_i$ for $i=1,2,..., m$ such that 
\begin{align*}
\alpha_1 f_1(x) +\cdots + \alpha_m f_m(x) = 0,
\end{align*}
for all $x \in \Z^d_{\ge 0}$.

Let $\displaystyle s=\max_{i=1,2,...,m} \|z_i\|_1 $ and denote $\mathcal{\tilde{C}} = \{ z_i:  \|z_i \|_1 = s\}$. Notice that for any function $f_i$ where $z_i \in \tilde{\mathcal{C}}$, $f_i(x)$ is a polynomial in $x$ and the leading term of the polynomial is $\frac{1}{c^{z_i}}x^{z_i}$. Notice that  for $i\neq j$, we have $z_i \neq z_j$ and hence $x^{z_i} \neq x^{z_j}$.  We may therefore conclude that $\alpha_i  = 0$ for any $z_i \in \tilde \C$.

The proof is then concluded by noting that the above procedure can be performed iteratively as you decrease the 1-norm of the complexes.
%
%
\end{proof}

 \bibliographystyle{plain}
\bibliography{poisson_time}

\end{document}